\newcommand{\R}{{\mathbb R}}
\newcommand{\N}{{\mathbb N}}
\newcommand{\EE}{{\mathbb E}}
\newcommand{\PP}{{\mathbb P}}
\newcommand{\1}{{\mathbbm 1}}
\theoremstyle{plain}
\newtheorem{theorem}{Theorem}
\newtheorem{lemma}{Lemma}
\theoremstyle{definition}
\newtheorem{ex}{Example}
\begin{document}
\title[Approximation of SDEs with smooth coeffients with derivatives of linear growth ]{A note on strong approximation of SDE{\footnotesize s} with smooth coefficients that have at most linearly growing derivatives}

\author[M\"uller-Gronbach and Yaroslavtseva]
{Thomas M\"uller-Gronbach and Larisa Yaroslavtseva}
\address{
Fakult\"at f\"ur Informatik und Mathematik\\
Universit\"at Passau\\
Innstrasse 33 \\
94032 Passau\\
Germany} \email{larisa.yaroslavtseva@uni-passau.de, thomas.mueller-gronbach@uni-passau.de}

\begin{abstract}
Recently, it has been shown in [Jentzen, A., M\"uller-Gronbach, T., and Yaroslavtseva, L., Commun. Math. Sci., 14, 2016] that there exists a system of autonomous
 stochastic differential equations (SDE) on the time interval $[0,T]$ with infinitely  differentiable and bounded coefficients  such that no strong approximation method 
based on evaluation of the
driving Brownian motion at finitely many fixed times 
   in $[0,T]$, e.g. on an equidistant grid, 
    can converge in absolute mean to the solution at the final time with a polynomial rate in terms of the number of Brownian motion values that are used.
In the literature on strong approximation of SDEs, polynomial error rate results are typically
 achieved under the assumption that the first order derivatives of the coefficients of the equation satisfy a polynomial growth condition. This assumption is violated for the 
    pathological  SDEs from  the above mentioned negative result.
    However, in the present
 article we  construct an SDE with smooth coefficients that have first order derivatives of at most linear growth such  that  the solution at the final time can not be approximated with a polynomial rate, whatever method based on observations of the
driving Brownian motion at finitely many fixed times is used. 
Most interestingly, it turns out that using a method that adjusts the number of evaluations of the driving Brownian motion to its actual path, the latter SDE can be approximated  with rate 1 in terms of the average number of evaluations that are used. To the best of our knowledge, this is only the second example in the literature of an SDE for which there exist adaptive methods that perform superior 
to non-adaptive ones with respect to the convergence rate. 
\end{abstract}

\maketitle

\section{Introduction}

Let $d,m\in \N$, $T\in(0, \infty)$, consider a $d$-dimensional system of autonomous stochastic differential equations (SDE)
\begin{equation}\label{sde0}
\begin{aligned}
dX(t) & = \mu(X(t)) \, dt + \sigma(X(t)) \, dW(t), \quad t\in [0,T],\\
X(0) & = x_0
\end{aligned}
\end{equation}
with a deterministic initial value $x_0\in\R^d$, a drift coefficient $\mu\colon\R^d\to\R^d$, a diffusion coefficient $\sigma\colon \R^d\to\R^{d\times m}$ and an $m$-dimensional driving Brownian motion $W$, and assume that \eqref{sde0} has a unique strong solution $(X(t))_{t\in[0, T]}$. 
A fundamental problem in the numerical analysis of SDEs is to characterize when 
the solution at the final time $X(T)$ can be approximated  with a polynomial 
error rate based on  finitely many evaluations of the driving Brownian motion $W$ 
in terms of explicit regularity conditions on the coefficients $\mu$ and $\sigma$.

It is well-known that if the coefficients $\mu$ and $\sigma$ are globally Lipschitz continuous then the classical Euler-Maruyama scheme achieves the rate of convergence $1/2$, see \cite{m55}. Moreover,
the recent literature on numerical approximation of SDEs contains a number
of results on approximation schemes that are specifically designed for 
     SDEs with
non-Lipschitz coefficients and achieve
polynomial convergence rates 
under weaker conditions on $\mu$ and $\sigma$, 
see e.g.
\cite{h96,hms02,
HutzenthalerJentzenKloeden2012, MaoSzpruch2013Rate,
WangGan2013,Sabanis2013ECP,TretyakovZhang2013,Beynetal2014,KumarSabanis2016, Beynetal2016, FangGiles2016}
for SDEs with globally monotone coefficients
and e.g.
\cite{BerkaouiBossyDiop2008,GyoengyRasonyi2011,DereichNeuenkirchSzpruch2012,
Alfonsi2013,NeuenkirchSzpruch2014,HutzenthalerJentzen2014,
HutzenthalerJentzenNoll2014CIR, 
LS15b,LS16, Tag16, HH16b} 
for SDEs with possibly non-monotone coefficients.

On the other hand, it has recently been shown in \cite{JMGY15}  that for any sequence  $(a_n)_{n\in\N}\subset (0,\infty)$, which may converge to zero arbitrarily slowly,  there exists an SDE \eqref{sde0} with $d=4$ and $m=1$ and with infinitely  differentiable and bounded coefficients $\mu$ and $\sigma$ such that no sequence of approximations $\widehat X_n(T)$ of $X(T)$, where $\widehat X_n(T)$ is
based on $n$ evaluations of the driving Brownian
motion $W$ at fixed time points in $[0,T]$, can converge to $X(T)$ in absolute mean  faster than the given sequence $(a_n)_{n\in\N}$. 
More formally, for this SDE one has for every $n\in\N$,
\begin{equation}
\label{eq:intro3} 
  \hspace{-0.5cm} \inf_{
    \substack{
      s_1,\dots,s_n\in [0,T]
    \\
     u\colon \R^n\to\R^4 \text{ measurable}
    }
  }\hspace{-0.5cm}
  \EE\bigl[
    |
      X( T )
      -
      u( W( s_1 ), \dots, W( s_n )
      )
    |\bigr]
\geq  a_n.
\end{equation}
In~\cite{GJS17} it has been proven 
that the negative result~\eqref{eq:intro3} can even be achieved  with $m=1$ and $d=2$ in place of $d=4$. 
In particular, \eqref{eq:intro3} implies that there exists an SDE \eqref{sde0} with infinitely  differentiable and bounded coefficients $\mu$ and $\sigma$ such that
its solution at the final time can not be approximated with a polynomial mean error rate  based on 
evaluations of the driving Brownian
motion $W$ at finitely many fixed time points in $[0,T]$, i.e., for every $\alpha>0$,
\begin{equation}\label{lnegres}
\lim_{n\to\infty}\Bigl( n^\alpha\cdot \hspace{-0.5cm} \inf_{
    \substack{
      s_1,\dots,s_n\in [0,T]
    \\
     u\colon \R^{nm}\to\R^d \text{ measurable}
    }
  }\hspace{-0.5cm}
\EE \bigl[|X(T)-u(W(s_1), \ldots,
W(s_n))|\bigl]\Bigr) =\infty.
\end{equation}
 We add that the latter statement for the special case when the approximation $u(W(s_1), \ldots,
W(s_n))$ is given by the Euler-Maruyama scheme with 
time step $1/n$
has first been shown in \cite{hhj12}.  

The proof of the negative result \eqref{eq:intro3} in \cite{JMGY15} is constructive. Each of the respective SDEs  is given by $X(0)=0$ and  
\begin{equation}\label{hardsde}
\begin{aligned}
dX_1(t) & = dt,\quad dX_2(t)  = f(X_1(t))\,dW(t),\quad
dX_3(t)  = g(X_1(t))\,dW(t),\\
dX_4(t) & = h(X_1(t)) \cdot \cos\bigl(X_2(t)\cdot \psi(X_3(t))\bigr)\,dt
\end{aligned}
\end{equation}
for $t\in [0,T]$, where $f,g,h\colon\R\to\R$ are infinitely  differentiable, bounded, nonzero and satisfy $\{f \neq 0\}\subset (-\infty, \tau_1]$, 
$\{g \neq 0\}\subset [\tau_1, \tau_2]$, $\{h\neq 0\}\subset [\tau_2, T]$, $\int_{\tau_2}^T h(t)\, dt \neq 0$ and $\inf_{x\in [0,\tau_1/2]} |f'(x)| >0$ for some $0<\tau_1<\tau_2<T$, and $\psi\colon\R\to (0,\infty)$ is infinitely differentiable, strictly increasing and satisfies $\lim_{x\to \infty} \psi(x) = \infty$. Under these assumptions 
the fourth component of the solution of the SDE \eqref{hardsde} at the final time is given by 
\begin{equation}\label{fourthcomp}
X_4(T)= \cos\Bigl(\int_{0}^{\tau_1}f(t)\,dW(t)\cdot\psi\Bigl(\int_{\tau_1}^{\tau_2}g(t)\,dW(t)\Bigr)\Bigr)\cdot \int_{\tau_2}^T h(t)\,  dt
\end{equation}
and there exist $c_1,c_2,c_3\in (0,\infty)$ such that for every $n\in\N$,
\begin{equation}\label{lb1}
  \inf_{
    \substack{
      s_1,\dots,s_n\in [0,T]
    \\
     u\colon \R^n\to\R \text{ measurable}
    }
  \hspace{-0.4cm}
  }
  \EE\bigl[
    |
      X_4( T )
      -
      u\big( W( s_1 ), \dots, W( s_n )
      \big)
    |\bigr]
\geq  c_1\cdot\exp\bigl(-c_2\cdot (\psi^{-1}(c_3 \cdot n^{3/2})^2\bigr),
\end{equation}
see Corollary 4.1 in~\cite{JMGY15}.

   It follows from~\eqref{lb1} that
if 
\begin{equation}\label{exp}
\forall q\in (0,\infty)\colon \,\,\lim_{x\to\infty} \exp(-q x^2)\cdot \psi (x) = \infty
\end{equation}
then a polynomial  rate of convergence to zero of the left hand side in~\eqref{lb1} can not be achieved, see Corollary 4.2 in~\cite{JMGY15}.
On the other hand it is straightforward to check that the equidistant Euler-Maruyama scheme for the SDE~\eqref{hardsde}
 achieves a polynomial mean error rate if the derivative  $\psi'$ of $\psi$ is of at most polynomial growth.
The latter two facts are reflected in the growth properties of the first order
 derivatives of the coefficients $\mu$ and $\sigma$ of the SDE~\eqref{hardsde}. 
 All of
the first order 
  derivatives of $\mu$ and $\sigma$ are globally bounded, up to the  derivatives 
\[
\frac{\partial \mu_4}{\partial x_2}(x) =  -h(x_1)\cdot\psi(x_3)\cdot\sin(x_2\cdot\psi(x_3)),\quad \frac{\partial \mu_4}{\partial x_3}(x) =  -h(x_1)\cdot x_2\cdot \psi'(x_3)\cdot \sin(x_2\cdot\psi(x_3)),
\] 
which are both of at most polynomial growth if and only if $\psi'$  is of at most  polynomial growth.

For the vast majority of SDEs with locally Lipschitz continuous coefficients used for modelling in applications it holds that the first order  derivatives of the coefficients are of at most polynomial growth.
Moreover, a polynomial growth condition on the 
first order 
derivatives of the coefficients of an SDE is 
one of the standing assumptions
in the literature
 when polynomial mean error rates are obtained under 
monotonicity 
 conditions, see e.g.
 \cite{h96,hms02,
HutzenthalerJentzenKloeden2012, 
WangGan2013,Sabanis2013ECP,TretyakovZhang2013,Beynetal2014,KumarSabanis2016, Beynetal2016, FangGiles2016}. 
         Therefore it is 
important to investigate whether a sub-polynomial rate of convergence as in \eqref{lnegres} may also happen  when the first order 
 derivatives of the coefficients are of at most polynomial growth.

This question can easily be answered with a yes. For the choice $\psi(x) = \exp(x^3)$, which satisfies~\eqref{exp}, the random variable $X_4(T)$ in~\eqref{fourthcomp} can also be obtained as the fifth component of the solution at the final time of an SDE given by $Y(0)=(0,0,0,1,0)$
 and
\begin{equation}\label{hardsde2}
\begin{aligned}
dY_1(t)&=dt,\quad dY_2(t) =f(Y_1(t))\, dW(t),\quad dY_3(t) =g(Y_1(t))\, dW(t),\\
dY_4(t)&=u(Y_1(t)) \cdot Y_3^3 (t)\cdot Y_4(t)\,dt, \quad 
dY_5(t)=v(Y_1(t)) \cdot \cos\bigl(Y_2(t)\cdot Y_4(t)\bigr)\, dt  
\end{aligned}
\end{equation}
for $t\in [0,T]$,
where $f$, $g$, $h$ satisfy the conditions stated below the SDE~\eqref{hardsde} and, additionally, $f'$ is bounded,  
and $u,v\colon \R\to\R$ are infinitely differentiable and satisfy  
$\{u\neq 0\}\subset [\tau_2, \tau_3]$,  $\{v\neq 0\}\subset
[\tau_3, T]$, $\int_{\tau_2}^{\tau_3} u(s)\, ds=1$ and 
$\int_{\tau_3}^T v(s)\, ds=\int_{\tau_2}^T h(t) dt$ for some $\tau_3\in (\tau_2,T)$. Clearly, the coefficients of the SDE~\eqref{hardsde2} have first order derivatives of at most polynomial growth and $Y_5(T) = X_4(T)$.

Note, however, that in contrast to the solution $X$ of the SDE~\eqref{hardsde}, the solution $Y$ of the SDE~\eqref{hardsde2} is not integrable at any time 
$t\in[\tau_3, T]$.
In fact, it is easy to see that
\[
\EE[\sup_{t\in[0, T]}|X(t)|] < \infty,\quad \inf_{t\in[\tau_3, T]}\EE[|Y_4(t)|] = \infty.
\]
It therefore seems reasonable to modify the question posed above and 
to
ask whether 
a sub-polynomial rate of convergence as in \eqref{lnegres} may also happen
for an SDE~\eqref{sde0}
 that has smooth coefficients with first order  derivatives of at most polynomial growth and a  solution $X$ with  
\begin{equation}\label{l102}
\EE[\sup_{t\in[0, T]}|X(t)|] < \infty.
\end{equation} 
 
In the actual paper we show that the answer to this question is positive as well. More precisely,
consider the $7$-dimensional SDE given by $X(0)=(0,0,0,0,1,0,0)$ and 
\begin{equation}\label{hardsde3}
\begin{aligned}
dX_1(t)&=dt,\quad dX_2(t) =f(X_1(t))\, dW(t),\\
 dX_3(t) &  =f^2(X_1(t))\,dt + 2 X_2(t)\cdot f(X_1(t))\, dW(t),\\
dX_4(t)&= \tfrac{1}{4} g'(X_1(t))\cdot X_3(t)\, dt,\quad 
dX_5(t) =X_4(t)\cdot X_5(t)\, dt,\\
dX_6(t) & = \tfrac{h'(X_1(t))\cdot X_5(t)}{(1+X_2^2(t))^{\frac{1}{2}}\cdot \ln^2(2 + X_2^2(t) )},\quad dX_7(t) = X_5(t)\cdot X_6(t)\, dt 
\end{aligned}
\end{equation}
for $t\in [0,T]$, where $f,g,h\colon\R\to\R$ satisfy the conditions stated below the SDE~\eqref{hardsde} and, additionally,
 $g,h\ge 0$,
$f'$ is bounded
  and   $\int_0^{\tau_1} f^2(t)\, dt = \int_{\tau_1}^{\tau_2} g(t)\, dt = \int_{\tau_2}^T h(t)\, dt =1$.
  See Example~\ref{ex:fgh} for a possible choice of $f,g,h$. 
 The assumptions on the functions $f,g$ and $h$ imply that 
 all of the first order  derivatives of 
 the coefficients of the SDE~\eqref{hardsde3} are of at most linear growth and the solution $X$ of the SDE~\eqref{hardsde3} satisfies the moment condition \eqref{l102}, see Lemmas~\ref{Tlemma1} and \ref{Tlemma2}. Moreover, as a consequence of
Theorem~\ref{t1} we obtain
that there exists $c\in (0,\infty)$ such that for all $n\in\N$,
\begin{equation}\label{newlb}
 \inf_{
    \substack{
      s_1,\dots,s_n\in [0,T]
    \\
     u\colon \R^n\to\R^7 \text{ measurable}
    }
  \hspace{-0.4cm}
  }
  \EE
    \bigl[|
      X( T )
      -
      u\big( W( s_1 ), \dots, W( s_n )
      \big)
    |\bigr]
\geq  c\cdot \frac{1}{\ln^2(n+1)},
\end{equation}
and therefore $X(T)$ can not be approximated with a polynomial mean error rate  based on 
evaluations of the driving Brownian
motion $W$ at finitely many fixed time points in $[0,T]$. To the best of our knowledge this is the first result in the literature, which shows that a sub-polynomial rate of convergence may happen even then when the first order derivatives of the coefficients are of at most polynomial growth.   It  implies in particular that for such SDEs 
      even 
 tamed or projected versions of the Euler-Maruyama scheme or the Milstein scheme, which are specifically designed to         cope with the case of 
superlinearly growing coefficients, see e.g. \cite{
HutzenthalerJentzenKloeden2012, Sabanis2013ECP,Beynetal2014,KumarSabanis2016, Beynetal2016}
 may fail to achieve a polynomial convergence rate.

The negative result~\eqref{newlb}
covers only approximations that are based on $n$ evaluations of the driving Brownian motion $W$ at fixed time points $s_1,\dots,s_n\in[0,T]$ and leaves it open whether a polynomial mean error rate can  be achieved by employing approximations that may 
    adapt 
the number as well as the location of the evaluation sites of
$W$ 
to the actual path of $W$, e.g. by  numerical schemes that adjust the actual step size according to a
criterion that is based on the values of  $W$ observed so far, see e.g.
\cite{Gaines1997,MG02_habil,m04, Moon2005, RW2006, LambaMattinglyStuart2007, Hoel2012,Hoel2014} and the references therein for methods of this type. However, it is
well-known that for a huge class of SDEs \eqref{sde0} with  globally Lipschitz continuous coefficients $\mu$ and $\sigma$  adaptive approximations of the latter type  can not achieve a better  rate of convergence compared to what is
best possible for non-adaptive ones, which at the same time coincides with the best possible rate of convergence that can be achieved by approximations based on evaluating  $W$ at $n$ equidistant times,
see \cite{MG02_habil,m04} and the discussion on asymptotic constants therein. Moreover, it has recently been shown in~\cite{Y17} that the SDE~\eqref{hardsde} with $\psi$ satisfying~\eqref{exp} 
can not be approximated with
a polynomial mean error rate even then when adaptive approximations may be used.

Up to now there seems to be only 
one
example of an SDE known in the literature, 
   for which
adaptive approximations are superior to non-adaptive ones with respect to the convergence rate. 
In~\cite{HH16} it has been shown that for the one-dimensional squared
Bessel process, i.e. the solution of the SDE~\eqref{sde0} with $d = m =1$, $\mu=1$ and $\sigma(x)=2\sqrt{|x|}$, any non-adaptive approximation
 of $X(T)$ based on $n$ equidistant evaluations of $W$ can
  only achieve a mean error rate of order $1/2$ in terms of $n$, while for every $\alpha \in (0,\infty)$ there exist $c\in (0,\infty)$ and a sequence of approximations $\widehat X_n(T)$, each based on $n$ sequentially chosen evaluations of $W$, such that $\EE[|X(T)- \widehat X_n(T)|] \le c\cdot n^{-\alpha}$. 

Interestingly it turns out that the SDE \eqref{hardsde3} provides the second 
example    after~\cite{HH16}  
of an SDE in the literature, for which  there exist adaptive approximations 
      that perform superior to non-adaptive ones with respect to the convergence rate.     
Indeed,  there exists $c\in (0,\infty)$ and  a sequence of approximations $\widehat X_n(T)$, each based on $n$ sequentially chosen evaluations of $W$ on average, such that for all $n\in\N$,
\[
\EE[|X(T)- \widehat X_n(T)|] \le c\cdot n^{-1},
\]
see Theorem \eqref{lt2}. 

We briefly describe the content of the paper. 
In Section~\ref{sec:setting} we introduce the particular SDE with smooth coefficients that
is studied in the present
paper and we discuss moment properties of its solution.  
Our main results on a sub-polynomial lower error bound for non-adaptive methods  (Theorem~\ref{t1}) and a polynomial upper error bound for a suitable adaptive method (Theorem~\ref{lt2}) are stated in Section~\ref{results}. The respective proofs are   carried out in Sections~\ref{proof1} and~\ref{proof2}. Section~\ref{discuss} is devoted to a discussion of our results and naturally arising open questions.

\section{An SDE with smooth coefficients that have at most linearly growing derivatives}
\label{sec:setting}

Throughout this article we fix the following setting.

Let 
$ T  \in (0,\infty) $,
let
$ ( \Omega, \mathcal{F}, \PP ) $
be a probability space with a normal filtration
$ ( \mathcal{F}_t )_{ t \in [0,T] } $
and let
$
  W \colon [0,T] \times \Omega \to \R
$
be a
standard $ ( \mathcal{F}_t )_{ t \in [0,T] } $-Brownian motion
on $ ( \Omega, \mathcal{F}, \PP ) $.

Let  $ 0<\tau_1<\tau_2<T$ and let $ f, h, g \in C^{ \infty }(\R, \R) $ satisfy 
\begin{equation}\label{assum1}
 \{f\neq 0\} \subseteq ( - \infty, \tau_1 ],\quad
 \{g\neq 0\} \subseteq [ \tau_1, \tau_2 ],\quad
 \{h\neq 0\}  \subseteq [ \tau_2, T]
\end{equation}
 as well as 
\begin{equation}\label{assum2} 
  \sup_{t\in ( - \infty, \tau_1 ]}| f(t)| <\infty,\quad \sup_{t\in ( - \infty, \tau_1 ]}| f'(t)| <\infty,\quad \inf_{ t\in [ 0, \tau_1/2  ] } | f'(t) | > 0,\quad g\geq 0,\quad h\ge 0
\end{equation}
  and
\begin{equation}\label{assum3}
\int_{0}^{\tau_1} f^2(t)\, dt=
\int_{\tau_1}^{\tau_2} g(t)\, dt=
 \int_{ \tau_2 }^{ T } h(t) \, dt =1.
\end{equation}
See the following example for a possible choice of $f,g$ and $h$.
\begin{ex}
\label{ex:fgh} Define $ \widetilde f, \widetilde g, \widetilde h \colon
\R \to \R $ by
\begin{equation}\label{bsp}
\begin{aligned}
  \widetilde f(x)  & = \1_{(-\infty,\tau_1)}(x)\cdot
    \exp\bigl(      
      \tfrac{
        1
      }{
        x - \tau_1
      }
    \bigr),\\
  \widetilde g(x) &  = \1_{(\tau_1,\tau_2)}(x)\cdot
    \exp\bigl(      
      \tfrac{ 1 }{ \tau_1 - x } + \tfrac{ 1 }{ x - \tau_2 }
    \bigr),\\
\widetilde h(x) &  =
    \1_{(\tau_2,T)}(x)\cdot
    \exp\bigl(      
      \tfrac{ 1 }{ \tau_2 - x } + \tfrac{ 1 }{ x - T }
    \bigr). 
    \end{aligned} 
    \end{equation}
Then the functions 
\begin{align*}
f&=\Bigl(\int_{0}^{\tau_1}\widetilde (f(s))^2\,ds\Bigr)^{-1/2}\cdot \widetilde f , \quad 
g= \Bigl(\int_{\tau_1}^{\tau_2}\widetilde g(s)\,ds\Bigr)^{-1}\cdot \widetilde g,\quad h= \Bigl(\int_{\tau_1}^{\tau_2}\widetilde h(s)\,ds\Bigr)^{-1}\cdot \widetilde h 
\end{align*} 
satisfy $f,g,h\in C^{ \infty }(\R, \R) $ as well as the conditions~\eqref{assum1}-\eqref{assum3}.
\end{ex}

Let $p\in [1, \infty)$ and define
$
  \mu, \sigma \colon \R^7 \to \R^7
$
as well as $x_0\in \R^7$ by
\begin{equation}\label{coeff}
\begin{aligned}
  \mu(x) & = \Bigl( 1, 0, f^2(x_1), \tfrac{g'( x_1 )}{4p}  \cdot x_3, x_4\cdot x_5, \tfrac{ h'( x_1 ) \cdot x_5}{(1+x_2^2)^{\frac{1}{2p}}\cdot \ln^{\frac{2}{p}} (2+x_2^2)},x_5\cdot x_6 \Bigr),\\
  \sigma(x) &=   \bigl(    0, f( x_1 ) , 2x_2\cdot f(x_1), 0, 0,0,0  \bigr),\\
  x_0&=(0,0,0,0,1,0,0).
\end{aligned}
\end{equation}

\begin{lemma}\label{Tlemma1} We have $\mu,\sigma\in C^\infty(\R^7,\R^7)$. Moreover, there exists $c\in (0,\infty)$ such that for all $x\in\R^7$,
\[
\sum_{i,j=1}^7 \bigl( \bigl |\tfrac{\partial \mu_i}{\partial x_j}(x)\bigr| + \bigl |\tfrac{\partial \sigma_i}{\partial x_j}(x)\bigr|\bigr)\leq c\cdot (1+|x|).
\]
\end{lemma}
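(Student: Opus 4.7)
\medskip

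\noindent\textbf{Proof plan for Lemma~\ref{Tlemma1}.}

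The plan is first to verify smoothness of $\mu$ and $\sigma$ and then to list all the nonzero partial derivatives and bound each of them by a constant times $(1+|x|)$. The smoothness claim follows at once from $f,g,h \in C^\infty(\R,\R)$ together with the observation that the denominator of $\mu_6$, namely
\[
D(x_2) := (1+x_2^2)^{\frac{1}{2p}}\cdot \ln^{\frac{2}{p}}(2+x_2^2),
\]
is a smooth function of $x_2$ that is bounded below by $(\ln 2)^{2/p}>0$, so $1/D$ is smooth.

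For the growth bound, the main thing I would exploit is that, by \eqref{assum1} and \eqref{assum2}, the functions $f$ and $f'$ are globally bounded, while $g$ and $h$ are smooth with compact support and therefore have the property that \emph{all} their derivatives are globally bounded. Let $M\in (0,\infty)$ denote a common upper bound for $|f|, |f'|, |g'|, |g''|, |h'|, |h''|$. Now I would go through the nonzero partials component by component. For $\sigma$ the only nonzero partials are
$\partial_{x_1}\sigma_2 = f'(x_1)$, $\partial_{x_1}\sigma_3 = 2x_2 f'(x_1)$ and $\partial_{x_2}\sigma_3 = 2f(x_1)$, all of which are obviously bounded by a constant times $(1+|x|)$. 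For $\mu$, one checks that the nonzero partials are
\[
\partial_{x_1}\mu_3 = 2 f f'(x_1),\quad \partial_{x_1}\mu_4 = \tfrac{g''(x_1)}{4p} x_3,\quad \partial_{x_3}\mu_4 = \tfrac{g'(x_1)}{4p},
\]
\[
\partial_{x_4}\mu_5 = x_5,\quad \partial_{x_5}\mu_5 = x_4,\quad \partial_{x_5}\mu_7 = x_6,\quad \partial_{x_6}\mu_7 = x_5,
\]
together with the three partials of $\mu_6$, and each of the displayed expressions is immediately $\le C(1+|x|)$.

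The only nontrivial verification is for the three nonzero partials of $\mu_6 = h'(x_1)\cdot x_5/D(x_2)$. Here $\partial_{x_1}\mu_6 = h''(x_1)\cdot x_5/D(x_2)$ and $\partial_{x_5}\mu_6 = h'(x_1)/D(x_2)$ are harmless since $D\ge (\ln 2)^{2/p}$ and $|h'|,|h''|\le M$. The real work is $\partial_{x_2}\mu_6$, for which I would compute
\[
\frac{d}{dx_2}\Bigl(\frac{1}{D(x_2)}\Bigr) = -\frac{x_2/p}{(1+x_2^2)^{1+\frac{1}{2p}}\ln^{\frac{2}{p}}(2+x_2^2)} - \frac{(2/p)\cdot x_2}{(1+x_2^2)^{\frac{1}{2p}}(2+x_2^2)\ln^{1+\frac{2}{p}}(2+x_2^2)}
\]
and observe that both terms are globally bounded in $x_2$ (each is dominated by $|x_2|/(1+x_2^2)$ times a constant). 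Hence $\partial_{x_2}\mu_6$ is bounded by a constant times $|x_5|\le (1+|x|)$. Collecting the bounds and taking $c$ to be the sum of the finitely many constants obtained yields the required estimate.

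The main (and only) obstacle is the last computation: one has to confirm that differentiating the tail $1/D(x_2)$ in $x_2$ does not introduce any growth in $x_2$, which is exactly what forces the specific exponents $\tfrac{1}{2p}$ and $\tfrac{2}{p}$ in the definition of $\mu_6$. Everything else is a routine verification using the bounds on $f,f',g',g'',h',h''$.
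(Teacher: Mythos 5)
Your proof is correct and follows essentially the same route as the paper: the paper's (much terser) proof likewise reduces everything to the boundedness of $f,f',g',g'',h',h''$ coming from \eqref{assum1}--\eqref{assum2} and the smoothness of $f,g,h$, with the only nontrivial point being that the $x_2$-derivative of the factor $(1+x_2^2)^{-\frac{1}{2p}}\ln^{-\frac{2}{p}}(2+x_2^2)$ stays bounded. (Minor slip: the chain rule gives the coefficient $4/p$, not $2/p$, in the second term of your derivative of $1/D$, which does not affect the bound.)
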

\begin{proof}
Infinite differentiability of $\mu$ and $\sigma$ is an immediate consequence of the definition of these functions and the fact that $f,g,h\in C^\infty(\R,\R)$. Moreover, it is straightforward to check that there exists $c\in (0,\infty)$ such that for all $i,j\in\{1,\dots,7\}$  and $x\in\R^7$,
\[
\bigl|\tfrac{\partial \sigma_i}{\partial x_j}(x)\bigr| \le c\cdot(|f(x_1)|+|f'(x_1)|)\cdot (1+|x|)
\]
and
\[
\bigl|\tfrac{\partial \mu_i}{\partial x_j}(x)\bigr| \le c\cdot(|f(x_1)|\cdot |f'(x_1)| + |g'(x_1)| + |g''(x_1)| + |h'(x_1)| + |h''(x_1)| + 1)\cdot (1+|x|),
\]
which jointly with the fact that $g,h\in C^\infty(\R,\R)$ and 
   the properties~\eqref{assum1} and~\eqref{assum2} 
yields at most linear growth for all first order derivatives of $\mu$ and $\sigma$.
\end{proof}

We study the SDE~\eqref{sde0} with $m=1$, $d=7$ and $x_0,\mu,\sigma$ given by~\eqref{coeff}, i.e. $X(0)=(0,0,0,0,1,0,0)^\top$ and 
\begin{equation}\label{sde}
\begin{aligned}
dX_1(t)&=dt,\quad dX_2(t) =f(X_1(t))\, dW(t),\\
 dX_3(t) &  =f^2(X_1(t))\,dt + 2 X_2(t)\cdot f(X_1(t))\, dW(t),\\
dX_4(t)&= \tfrac{1}{4p} g'(X_1(t))\cdot X_3(t)\, dt,\quad 
dX_5(t) =X_4(t)\cdot X_5(t)\, dt,\\
dX_6(t) & = \tfrac{h'(X_1(t))\cdot X_5(t)}{(1+X_2^2(t))^{\frac{1}{2p}}\cdot \ln^\frac{2}{p}(2 + X_2^2(t) )},\quad dX_7(t) = X_5(t)\cdot X_6(t)\, dt 
\end{aligned}
\end{equation}

Observing~\eqref{assum1} and using It\^{o}'s formula for the component $X_3$ it is straightforward to see that  the equation \eqref{sde} has a unique strong solution given by
\begin{equation}\label{solution}
\begin{aligned}
  X_1(t) & = t ,
\quad
  X_2(t) = \int_0^{ \min( t, \tau_1 ) } f(s) \, dW(s),\quad X_3(t) = X_2^2(t),
\\
 X_4(t) & = \tfrac{1}{4p} X_2^2(\tau_1)\cdot g(t),\quad  X_5(t)  =  \exp\Bigl(\tfrac{1}{4p}\,
  X_2^2(\tau_1)\cdot\int_{0}^{ \min( t , \tau_2 ) } g(s) \, ds \Bigr) ,
\\
  X_6(t) & =
  \tfrac{ X_5(\tau_2)}{(1+X_2^2(\tau_1))^{\frac{1}{2p}}\cdot \ln^{\frac{2}{p}} (2+X_2^2(\tau_1))}
  \cdot
 h(t),\\
 X_7(t) & =     \tfrac{ X_5^2(\tau_2)}{(1+X_2^2(\tau_1))^{\frac{1}{2p}}\cdot \ln^{\frac{2}{p}} (2+X_2^2(\tau_1))}
  \cdot
\int_{0}^{t} h(s)\, ds
\end{aligned}
\end{equation}
for all $ t \in [0,T] $. In particular, by~\eqref{assum3},
\begin{equation}\label{solution2}
\begin{aligned}
  X_1(T) & = T ,\quad  X_2(T) =  X_2(\tau_1) =\int_0^{  \tau_1  } f(s) \, dW(s),
  \quad X_3(T) = X_2^2(T),\\
 X_4(T)& = 0,\quad  X_5(T)  =  \exp\Bigl(\tfrac{1}{4p}\,
  X_2^2(\tau_1)\Bigr), \quad X_6(T) = 0,\\
  X_7(T)  & = \tfrac{ \exp\bigl(\frac{1}{2p}
  X_2^2(\tau_1)\bigr)}{(1+X_2^2(\tau_1))^{\frac{1}{2p}}\cdot \ln^{\frac{2}{p}} (2+X_2^2(\tau_1))}.
\end{aligned}
\end{equation}

Next we discuss integrability properties of the solution $X$.
\begin{lemma}\label{Tlemma2}
We have $X_2(\tau_1)\sim \mathcal{N}( 0,  1)$. Moreover, for all $q\in(0, \infty)$,
\[
\EE \big[\sup_{t\in[0,T]} |X(t)|^q\bigr]<\infty\quad \Leftrightarrow \quad q\le p.
\]
\end{lemma}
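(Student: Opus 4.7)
The plan is to exploit the explicit representation \eqref{solution}--\eqref{solution2} of $X$, so that the entire moment analysis reduces to Gaussian calculations involving the single random variable $Z := X_2(\tau_1)$. First, since $f$ is deterministic and square-integrable on $[0,\tau_1]$ by \eqref{assum2}, the Wiener integral $X_2(\tau_1) = \int_0^{\tau_1} f(s)\,dW(s)$ is centered Gaussian, and its variance equals $\int_0^{\tau_1} f^2(s)\,ds = 1$ by \eqref{assum3} and It\^o's isometry. This settles $Z \sim \mathcal{N}(0,1)$.

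For the moment equivalence I treat the seven components one by one. The components $X_1 \equiv t$ and $X_2(t) = \int_0^{\min(t,\tau_1)} f\,dW$ are pathwise controlled by a continuous Gaussian martingale on $[0,\tau_1]$; Doob's maximal inequality (or BDG) yields $\EE[\sup_t |X_2(t)|^q] < \infty$ for every $q \in (0,\infty)$, and consequently so do $X_3 = X_2^2$ and $X_4 = \tfrac{1}{4p} Z^2 g$. Since $g \ge 0$ and $Z^2 \ge 0$, the drift $X_4 X_5$ of $X_5$ is non-negative, so $X_5$ is non-decreasing and $\sup_t X_5(t) = X_5(T) = \exp(Z^2/(4p))$; the standard Gaussian moment generating identity shows $\EE[\exp(q Z^2/(4p))] < \infty$ iff $q < 2p$, which is implied by $q \le p$. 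An analogous bound for $X_6$ uses $\sup_t|X_6(t)| \le \|h\|_\infty X_5(\tau_2)/\bigl[(1+Z^2)^{1/(2p)}\ln^{2/p}(2+Z^2)\bigr]$. Because $X_5, X_6, h \ge 0$, $X_7$ is non-decreasing, so $\sup_t X_7(t) = X_7(T)$, and this component is the integrability bottleneck.

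The core calculation is the Gaussian integral
\[
\EE\bigl[|X_7(T)|^q\bigr] = \frac{1}{\sqrt{2\pi}}\int_{\R} \frac{\exp\bigl((\tfrac{q}{2p}-\tfrac{1}{2})z^2\bigr)}{(1+z^2)^{q/(2p)}\,\ln^{2q/p}(2+z^2)}\,dz.
\]
For $q < p$ the coefficient of $z^2$ in the exponent is strictly negative and the integrand decays super-polynomially, giving a finite integral. For $q > p$ the coefficient is strictly positive and the polynomial/logarithmic denominator cannot offset the Gaussian growth, so the integral diverges. The delicate borderline case $q = p$ is exactly where the specific weight $(1+x_2^2)^{1/(2p)}\ln^{2/p}(2+x_2^2)$ in $\mu_6$ is designed to pay off: the exponential factor collapses to $1$ and the integrand becomes $[\sqrt{2\pi}(1+z^2)^{1/2}\ln^2(2+z^2)]^{-1}$, which tails like $|z|^{-1}(\ln|z|)^{-2}$ and is (barely) integrable via the substitution $u = \ln|z|$.

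To conclude, for $q \le p$ the bounds on the individual components combined with equivalence of norms on $\R^7$ give $\EE[\sup_{t \in [0,T]} |X(t)|^q] < \infty$; conversely for $q > p$ the trivial lower bound $\sup_{t \in [0,T]} |X(t)| \ge X_7(T)$ and the divergence of $\EE[|X_7(T)|^q]$ force $\EE[\sup_{t \in [0,T]} |X(t)|^q] = \infty$. The main obstacle is the borderline case $q = p$, where the precise rate of the logarithmic denominator is what separates integrability from non-integrability; once that Gaussian-integral computation is in hand, all other parts are routine moment estimates.
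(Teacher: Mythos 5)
Your proposal is correct and follows essentially the same route as the paper: use the explicit solution formula to reduce everything to Gaussian computations in $Z=X_2(\tau_1)$, control $X_1,\dots,X_6$ for all $q\le p$ (BDG for $X_2$, the exponential moment $\EE[\exp(\tfrac{q}{4p}Z^2)]<\infty$ for $q<2p$ for $X_5,X_6$), and identify $X_7(T)$ as the bottleneck via the integral $\int \exp\bigl((\tfrac{q}{2p}-\tfrac12)z^2\bigr)(1+z^2)^{-q/(2p)}\ln^{-2q/p}(2+z^2)\,dz$, finite iff $q\le p$. Your explicit treatment of the borderline case $q=p$ (tail $\sim |z|^{-1}\ln^{-2}|z|$) just spells out what the paper states without detail, so there is no substantive difference.
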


\begin{proof}
The first statement follows immediately from the definition of $X_2(\tau_1)$ and the fact that $\EE[X_2^  2(\tau_1)] = \int_{0}^{\tau_1} f^2(t)\, dt= 1$, due to~\eqref{assum3}. Moreover, applying the Burkholder-Davis-Gundy  inequality we obtain that for all $q\in(0, \infty)$ there exists $c\in(0, \infty)$ such that
\begin{equation}\label{mom1}
\EE \big[\sup_{t\in[0,T]} |X_2(t)|^q\bigr] \le c\cdot \Bigl(\int_0^T f^2(t)\, dt\Bigr)^{\tfrac{q}{2}} =c.
\end{equation}
 Employing~\eqref{solution},~\eqref{mom1} and the properties of $g$ we conclude that for all $q\in(0, \infty)$, 
\[
\EE \big[\sup_{t\in[0,T]} |X_4(t)|^q\bigr] = \tfrac{1}{(4p)^q}\cdot\EE[|X_2(\tau_1)|^{2q}]\cdot \sup_{t\in [\tau_1,\tau_2]} (g(t))^q < \infty.
\]
Furthermore,~\eqref{assum2},~\eqref{assum3},~\eqref{solution} and the fact that  $X_2(\tau_1)\sim \mathcal{N}( 0,  1)$ imply that for all $q \in (0,2p)$, 
\[
\EE\big[\sup_{t\in[0,T]} |X_5(t)|^{q}\bigr] = \EE\bigl[\exp(\tfrac{q}{4p}X_2^2(\tau_1))\bigr] = \sqrt{\tfrac{2p}{2p-q}}.
\]
By~\eqref{solution}, the latter  equality and the properties of $h$ we get that for all $q \in (0,2p)$, 
\[
\EE\big[\sup_{t\in[0,T]} |X_6(t)|^{q}\bigr] \le  \tfrac{1}{\ln^{\frac{2q}{p}}(2)}\cdot \EE[|X_5(\tau_2)|^{q}]\cdot \sup_{t\in[\tau_2,T]} h^q(t) <\infty.
\]
Finally, by~\eqref{solution} we see that for all $q\in(0, \infty)$, 
\begin{align*}
\EE\big[\sup_{t\in[0,T]} |X_7(t)|^{q}\bigr] & = \EE\Bigl[\tfrac{ \exp\bigl(\frac{q}{2p}
  X_2^2(\tau_1)\bigr)}{(1+X_2^2(\tau_1))^{\frac{q}{2p}}\cdot \ln^{\frac{2q}{p}} (2+X_2^2(\tau_1))}\Bigr]  = \sqrt{\tfrac{2}{\pi}}\int_0^\infty \tfrac{\exp\bigl(\tfrac{q-p}{2p}\cdot x^2\bigr) }{(1+x^2)^{\frac{q}{2p}}\cdot \ln^{\frac{2q}{p}}(2+x^2)} \, dx,  
\end{align*}
and the latter quantity is finite if and only if $q\le p$.
\end{proof}

\section{Lower and upper error bounds}
\label{results} 

We study strong approximation of the solution $X$ of the equation~\eqref{sde} at the final time $T$.
The following result shows that $X_7(T)$ and thus $X(T)$ as well 
can not be approximated in $p$-th mean sense
 with a polynomial error rate in terms of the number of evaluations of the driving Brownian motion $W$ as long as the number and the location of the evaluation nodes for $W$ are not chosen in a  path-dependent way.

\begin{theorem}
\label{t1}  There exists $c\in(0, \infty)$ such that for all $n\in\N$,
\[
\inf_{
    \substack{
      s_1,\dots,s_n\in [0,T]
    \\
     u\colon \R^n\to\R \text{ measurable}
    }
    }\Bigl(\EE
    \bigl[|
      X_7( T ) -u(W(s_1), \ldots, W(s_n))
    |^p\bigr]\Bigr)^{\tfrac{1}{p}}\geq c\cdot  \frac{1}{\ln^{\frac{2}{p}}(n+1)}.
\]

\end{theorem}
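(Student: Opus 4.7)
The plan is to reduce the problem to a nonlinear Gaussian approximation problem in a single variable. By the explicit formula~\eqref{solution2}, $X_7(T)=F(X_2(\tau_1))$ for the function
\[
F(z)=\frac{\exp(z^2/(2p))}{(1+z^2)^{1/(2p)}\cdot\ln^{2/p}(2+z^2)},
\]
and $X_7(T)$ is $\sigma(X_2(\tau_1))\subseteq\mathcal{F}_{\tau_1}$-measurable. Enlarging the observation vector by $W(\tau_1)$ can only decrease the best $L^p$-error, and once $W(\tau_1)$ is available, all values $W(s_i)$ with $s_i>\tau_1$ are just $W(\tau_1)$ plus increments independent of $\mathcal{F}_{\tau_1}$, so they are useless for predicting $X_7(T)$. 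Hence, up to replacing $n$ by $n+1$, I may restrict to observations $0\leq t_1<\ldots<t_{n+1}\leq\tau_1$. Writing $Z=X_2(\tau_1)$ and $Y=(W(t_1),\ldots,W(t_{n+1}))$, the pair $(Z,Y)$ is centered Gaussian, so the conditional law of $Z$ given $Y$ is $\mathcal{N}(M_Y,\rho_n^2)$ with $\rho_n^2$ deterministic and $M_Y\sim\mathcal{N}(0,1-\rho_n^2)$.

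The second step applies the two-copies inequality: for any $u$-measurable approximation and conditionally i.i.d.\ copies $Z_1,Z_2$ of $Z$ given $Y$,
\[
\EE[|X_7(T)-u(Y)|^p]\geq 2^{-p}\EE[|F(Z_1)-F(Z_2)|^p],
\]
and marginally $(Z_1,Z_2)$ is bivariate normal with standard marginals and correlation $r_n=1-\rho_n^2$. To bound $\rho_n^2$ from below, I will observe that $\rho_n^2=\min_g\int_0^{\tau_1}(f(s)-g(s))^2\,ds$ over piecewise-constant $g$ on the partition induced by the $t_i$, since $W(t_i)=\int_0^{\tau_1}\1_{[0,t_i]}\,dW$; Poincaré's inequality on each subinterval of $[0,\tau_1/2]$, combined with $\inf_{[0,\tau_1/2]}|f'|>0$ from~\eqref{assum2} and the convexity of $t\mapsto t^3$ applied to the subinterval lengths, yields $\rho_n^2\geq c_1/(n+1)^2$, hence $\ln(1/\rho_n)\leq \ln(n+1)+C$.

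The main analytical step is the asymptotic lower bound
\[
\EE[|F(Z_1)-F(Z_2)|^p]\geq c/\ln^2(1/\rho_n),
\]
which I will establish by exploiting the balance
\[
F(z)^p\cdot\phi(z)=\frac{1}{\sqrt{2\pi}\cdot(1+z^2)^{1/2}\cdot\ln^2(2+z^2)},
\]
where $\phi$ is the standard Gaussian density. On the event $\{Z_1\in[1/\rho_n,2/\rho_n]\}$ the conditional distribution of $Z_2$ is $\mathcal{N}((1-\rho_n^2)Z_1,2\rho_n^2-\rho_n^4)$, and the identity $\log F(z_2)-\log F(z_1)=(z_2^2-z_1^2)/(2p)+O(1)$ together with $z_2^2-z_1^2\approx 2z_1(z_2-z_1)$ shows $|F(Z_2)/F(Z_1)-1|$ stays bounded away from $0$ with positive probability, uniformly in the relevant range of $z_1$ and for $\rho_n$ small. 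Hence $\EE[|F(Z_1)-F(Z_2)|^p\mid Z_1=z]\geq cF(z)^p$ for $z\in[1/\rho_n,2/\rho_n]$, and integrating the balance identity over this interval produces a factor of order $\int_{1/\rho_n}^{2/\rho_n}\frac{dz}{z\ln^2 z}\sim 1/\ln^2(1/\rho_n)$. Combining with the estimate on $\rho_n$ and taking $p$-th roots gives the claimed bound $c/\ln^{2/p}(n+1)$. The main obstacle is precisely this last estimate, because the natural linearization $F(Z_1)-F(Z_2)\approx F'(Z_1)(Z_1-Z_2)$ fails in $L^p$ — $F'(Z_1)$ is not $p$-integrable — so one must work with the exact ratio $F(Z_2)/F(Z_1)$ rather than its derivative and exploit the exact cancellation of the exponential factors in $F^p\phi$.
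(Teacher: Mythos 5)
Your proposal is correct in substance and takes a genuinely different, though parallel, route from the paper's proof; let me compare.

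The paper does not pass to the full conditional law of $X_2(\tau_1)$ given the observations. Instead, given $s_1,\dots,s_n$, it picks (by pigeonhole) a single gap interval $[t_0,t_1]\subset[0,\tau_1/2]$ of length $\tau_1/(2(n+1))$ that is free of observation nodes, decomposes $W$ on that interval into the piecewise-linear interpolant $\overline W$ plus the independent Brownian bridge $B$, and applies the symmetry lemma with $(V_2',V_2'')=(B,-B)$. This yields $2^{-p}\,\EE\bigl[|G(Y_1+Y_2)-G(Y_1-Y_2)|^p\bigr]$, with $Y_2=-\int_{t_0}^{t_1}f'B$ of variance $\sigma_2^2\asymp (n+1)^{-3}$. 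You instead add $W(\tau_1)$ to the observations, discard the ones after $\tau_1$, and condition on everything, obtaining $Z\mid Y\sim\mathcal N(M_Y,\rho_n^2)$ with $\rho_n^2\gtrsim (n+1)^{-2}$ (your bound is actually sharper than the paper's; but since only $\ln(1/\rho_n^2)\lesssim\ln n$ matters, either works, and the weaker $n^{-3}$ bound from a single gap would also suffice and would avoid the slightly fiddly bookkeeping with the interval that straddles $\tau_1/2$ and the last interval where the projection is forced to vanish). Your ``two-copies'' inequality with conditionally i.i.d.\ $Z_1,Z_2$ given $Y$ is exactly the paper's Lemma~\ref{symm} applied with $V_1=Y$, $V_2'=Z_1$, $V_2''=Z_2$, so the $2^{-p}$ reduction is the same device in a different guise. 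The genuinely different ingredient is the final estimate: the paper lower-bounds $G(y_1+y_2)-G(y_1-y_2)\ge\tfrac{1}{2p}\,y_1 y_2 G(y_1)$ for $y_1\in[n^{3/2},2n^{3/2}]$, $y_2\in[0,\sigma_2]$, and then carries out a two-dimensional integral against the joint Gaussian density, whereas you condition on $Z_1=z$ for $z\in[1/\rho_n,2/\rho_n]$ and show that the ratio $F(Z_2)/F(Z_1)$ deviates from $1$ by a fixed amount with uniformly positive probability (via $\log F(z_2)-\log F(z_1)\approx(z_2^2-z_1^2)/(2p)$, with $z_1(z_2-z_1)=\Theta(1)$ in that range), reducing to a one-dimensional integral $\int_{1/\rho_n}^{2/\rho_n}F^p\phi\,dz\asymp 1/\ln^2(1/\rho_n)$. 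Both hinge on the same cancellation $F^p(z)\phi(z)=\bigl(\sqrt{2\pi}(1+z^2)^{1/2}\ln^2(2+z^2)\bigr)^{-1}$, and both pick the integration window at the scale $z\sim(\text{residual standard deviation})^{-1}$; the ratio-based argument is arguably slightly cleaner because it avoids the inner $y_2$-integral, while the paper's derivative bound $G'(x)\ge\tfrac{x}{2p}G(x)$ is more elementary. Your closing remark that $F'(Z_1)$ is not $p$-integrable correctly explains why neither proof can work with a global linearization, but note that the paper's use of $G'$ is, like yours, confined to a bounded window of $y_1$, so this is a shared feature rather than a point of contrast.
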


Our next result shows that a polynomial $p$-th mean error rate 
for approximation of $X(T)$
 can be achieved if the number 
 of the evaluation nodes for $W$ is adjusted to the current path of $W$.

For $n\in\N$ we use  $ \overline{W}_n \colon [ 0, \tau_1 ] \times \Omega \to \R $ to denote the piecewise linear interpolation of $W$ on $[0,\tau_1]$ at the nodes $t_i= i/n\cdot \tau_1$, $i=0,\dots,n$, i.e.
 \[
  \overline{W}_n( t ) = \frac{ t - t_{i-1}}{\tau_1/n }
  \cdot
  W( t_i) + \frac{  t_i- t  }{\tau_1/n } \cdot W( t_{i-1}),\quad t\in[t_{i-1},t_i],
\]
for  $i\in\{1, \ldots, n\}$. We define approximations of the single components of $X(T)$ in the following way.  Put
\begin{equation}\label{87}
\begin{aligned}
\widehat X_{n, 1}(T)& =T, \quad \widehat X_{n, 2}(T)=-\int_0^{\tau_1}f'(t)\cdot \overline W_n(t)\,dt,\quad  \widehat X_{n, 3}(T)= \widehat X_{n, 2}^2(T),\\
\widehat X_{n, 4}(T) & = 0,\quad  \widehat X_{n, 5}(T)=\exp\bigl(\tfrac{1}{4p}\,
 \widehat X_{n, 2}^2\bigr),\quad \widehat X_{n, 6}(T)=0.
 \end{aligned}
\end{equation}
Next, let 
\[
a_\ell=2\sqrt{\ln \ell}
\]
for  $ \ell\in\N$ and put
\[
\widehat X^*_{n, 2}(T)=\sum_{\ell=1}^\infty \widehat X_{\ell n, 2}(T)\cdot 1_{[a_{\ell}, a_{\ell+1})}(|\widehat X_{n, 2}(T)|).
\]  
Finally, define  $G\colon \R\to\R$ by 
\begin{equation}\label{60}
G(x)=\frac{ \exp\bigl(\frac{1}{2p}
  x^2\bigr)}{(1+x^2)^{\frac{1}{2p}}\cdot \ln^{\frac{2}{p}} (2+x^2)},\quad x\in\R,
\end{equation}
and put
\[
\widehat X^\ast_{n, 7}(T)=G(\widehat X^*_{n, 2}(T))
\]
as well as
\[
 \widehat X^\ast_{n}(T)=\bigl(\widehat X_{n, 1}(T),\dots,\widehat X_{n, 6}(T), \widehat X^\ast_{n, 7}(T) \bigr).
\]

Clearly, the random number of evaluations of $W$  used by the approximation  
$\widehat X^\ast_{n}(T)$ is given by
\[
\text{cost}(\widehat X^\ast_n(T)) = n\sum_{\ell=1}^\infty \ell\cdot \1_{[a_{\ell}, a_{\ell+1})}(|\widehat X_{n, 2}(T)|).
\]

\begin{theorem}
\label{lt2}  There exists $c\in (0,\infty)$ such that for all $n\in\N$,
\[
\EE\bigl[\text{cost}(\widehat X^\ast_n(T))\bigr]\leq c\cdot n \quad\text{and}\quad
\bigl(\EE
    \bigl[|
      X( T ) -\widehat X^\ast_n(T)
    |^p\bigr]\bigr)^{\tfrac{1}{p}}\leq   \frac{c}{n}.
\]
\end{theorem}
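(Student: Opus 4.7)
The proof falls into two parts: the cost bound, which is easy, and the $L^p$ error bound, of which five of the seven coordinates are straightforward and only the seventh requires real work. Let $E_\ell := \{|\widehat X_{n,2}(T)|\in[a_\ell,a_{\ell+1})\}$, so that $\widehat X^\ast_{n,2}(T) = \widehat X_{\ell n,2}(T)$ on $E_\ell$. For the cost, $\EE[\text{cost}(\widehat X^\ast_n(T))] = n\sum_{\ell\geq 1}\ell\,\PP(E_\ell)$; Abel summation reduces this to $n\sum_{\ell\geq 1}\PP(|\widehat X_{n,2}(T)|\geq a_\ell)$. Since $\widehat X_{n,2}(T)$ is a centred Gaussian whose variance is bounded uniformly in $n$ (tending to $\EE[X_2(\tau_1)^2]=1$ as $n\to\infty$), the Gaussian tail together with $a_\ell=2\sqrt{\ln\ell}$ gives $\PP(|\widehat X_{n,2}(T)|\geq a_\ell)\lesssim\ell^{-c'}$ for some $c'>1$, so $\EE[\text{cost}(\widehat X^\ast_n(T))]\leq cn$.

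For the error, split $|X(T)-\widehat X^\ast_n(T)|^p\leq c\sum_{i=1}^7|X_i(T)-\widehat X^\ast_{n,i}(T)|^p$. Coordinates $i\in\{1,4,6\}$ vanish. For $i=2$, integration by parts (using $f(\tau_1)=0$, which follows from continuity of $f$ and the support condition~\eqref{assum1}) gives $X_2(T)-\widehat X_{n,2}(T)=-\int_0^{\tau_1}f'(s)(W(s)-\overline W_n(s))\,ds$; since $W-\overline W_n$ is a concatenation of independent Brownian bridges, this centred Gaussian has variance $O(1/n^2)$, hence $L^p$-norm $O(1/n)$. For $i=3$, factor $X_2(T)^2-\widehat X_{n,2}(T)^2=(X_2(T)-\widehat X_{n,2}(T))(X_2(T)+\widehat X_{n,2}(T))$ and apply H\"older. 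For $i=5$, the mean value theorem gives $|X_5(T)-\widehat X_{n,5}(T)|\leq c|X_2(T)^2-\widehat X_{n,2}(T)^2|\max(X_5(T),\widehat X_{n,5}(T))$; H\"older with an exponent $r<2$ on the exponential factor works because $\EE[X_5(T)^q]<\infty$ for every $q<2p$ by Lemma~\ref{Tlemma2}.

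Coordinate $i=7$ is the crucial step. Set $D_\ell := X_2(\tau_1)-\widehat X_{\ell n,2}(T)=-\int_0^{\tau_1}f'(s)(W(s)-\overline W_{\ell n}(s))\,ds$; the independent-Brownian-bridge structure shows $D_\ell$ is independent of the $\sigma$-algebra generated by $\{W(i\tau_1/(\ell n)):i=0,\ldots,\ell n\}$, hence independent of $\widehat X_{n,2}(T)$ and $\widehat X_{\ell n,2}(T)$. Decompose
\begin{equation*}
\EE\bigl[|G(X_2(\tau_1))-G(\widehat X^\ast_{n,2}(T))|^p\bigr]=\sum_{\ell\geq 1}\EE\bigl[|G(X_2(\tau_1))-G(\widehat X_{\ell n,2}(T))|^p\1_{E_\ell}\bigr].
\end{equation*}
On $E_\ell$, $|\widehat X_{n,2}(T)|\leq a_{\ell+1}$, so $M_\ell:=\max(|X_2(\tau_1)|,|\widehat X_{\ell n,2}(T)|)\leq a_{\ell+1}+R_\ell$ with $R_\ell$ a Gaussian remainder of order $1/n$ that is independent of $\widehat X_{n,2}(T)$. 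Combining the mean value theorem, the pointwise estimate $|G'(z)|\leq c(1+|z|)G(z)$, the convexity bound $G(a_{\ell+1}+R_\ell)^p\leq cG(a_{\ell+1})^p e^{a_{\ell+1}R_\ell+R_\ell^2/2}$, Cauchy--Schwarz to peel off the independent factor $|D_\ell|^p$, moment generating function bounds for $R_\ell$ (valid since $\mathrm{Var}(R_\ell)=O(1/n^2)$ and $a_{\ell+1}^2=4\ln(\ell+1)$), and $\EE[|D_\ell|^p]=O((\ell n)^{-p})$ produces
\begin{equation*}
\EE\bigl[|G(X_2(\tau_1))-G(\widehat X_{\ell n,2}(T))|^p\1_{E_\ell}\bigr]\lesssim G(a_{\ell+1})^p(1+a_{\ell+1})^p(\ell n)^{-p}\,\PP(E_\ell).
\end{equation*}
With $G(a_{\ell+1})^p\sim\ell^2(\ln\ell)^{-1/2}(\ln\ln\ell)^{-2}$, $(1+a_{\ell+1})^p\sim(\ln\ell)^{p/2}$ and $\PP(E_\ell)\lesssim\ell^{-3}(\ln\ell)^{-1/2}$, the series is summable in $\ell$ and yields $L^p$-error $O(1/n)$. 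The main obstacle is exactly this step: because $\EE[X_7(T)^q]=\infty$ for every $q>p$ (Lemma~\ref{Tlemma2}), no standard H\"older argument with a larger exponent on $G(M_\ell)$ is available, so the adaptive refinement must be exploited quantitatively, and the tight balancing between $a_\ell=2\sqrt{\ln\ell}$ and the $\ln^{2/p}(2+x^2)$ factor in the denominator of $G$ is precisely what makes the final sum converge.
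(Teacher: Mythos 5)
Your proposal follows essentially the same route as the paper's proof and the core argument is correct. The cost bound via Abel summation plus the Gaussian tail is equivalent to (and, for $\nu_n$ small, slightly cleaner than) the paper's density-based estimate~\eqref{T767}. For the error in coordinate $7$, your decomposition over the shells $E_\ell=\{|\widehat X_{n,2}(T)|\in[a_\ell,a_{\ell+1})\}$, the use of the independence of $Z_n$, $Z_{\ell n}-Z_n$ and $Y_{\ell n}$ (the paper's~\eqref{T333}), the pointwise derivative bound for $G$, and the moment-generating-function control of the Gaussian remainder are exactly the ingredients of Lemma~\ref{Tseven}; the paper packages the last step in Lemma~\ref{lemma2}, which you only sketch but whose content (bound $\EE[|Z|^q\exp(a|Z|+|Z|^2)]$ when $\mathrm{Var}(Z)=O(n^{-2})$) is the one nontrivial quantitative fact you would have to supply. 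Your bookkeeping is organised a little differently — you pull the indicator $\1_{E_\ell}$ out as a factor $\PP(E_\ell)$ after bounding $|Z_n|\le a_{\ell+1}$, whereas the paper integrates the factor $\exp(|Z_n|^2/2)\1_{E_\ell}$ directly (this is the quantity $\EE[A_{\ell,n}]$) — but the two versions are equivalent up to constants.

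Two small remarks. First, your final diagnosis that the convergence of the series hinges on the ``tight balancing between $a_\ell=2\sqrt{\ln\ell}$ and the $\ln^{2/p}(2+x^2)$ factor in the denominator of $G$'' overstates the role of that $\ln$-factor: the paper's own estimate~\eqref{T569} discards it entirely and still obtains a summable series $\sum_\ell \ln^{2p-\frac12}(\ell+1)/\ell^{p+\frac23}$. What is genuinely indispensable is the extra factor $\ell^{-1}$ coming from the width $a_{\ell+1}-a_\ell\lesssim 1/\ell$ of the shells (otherwise $\exp(a_{\ell+1}^2/2)\PP(E_\ell)$ would only be $O(1)$ and the $\ell$-sum would diverge for $p=1$); keeping the $\ln$-factor from $G$ is a bonus, not the decisive mechanism. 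Second, your argument implicitly assumes $n$ large enough that $\nu_n^2\ge 11/12$ and $\sigma_{\ell n}^2\le \tfrac1{12}$; for the finitely many smaller $n$ one still needs to know $\EE[|G(\widehat X^*_{n,2}(T))|^p]<\infty$, which the paper establishes separately via a Jensen-type argument using the convexity of $G\circ\rho$ (see~\eqref{T111}). You should either run that argument or note explicitly that the tail of your $\ell$-series is still controlled for small $n$ and only finitely many head terms need an a priori finiteness check.
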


Finally, we show that for $q<p$ a polynomial $q$-th mean error rate
for approximation of $X(T)$
 can be achieved with a sequence of non-adaptive approximations. For $n\in\N$ put 
\[
\widehat X_{n,7}(T) = G(\widehat X_{n,2}(T))
\]
with $G$ given by~\eqref{60} and define
\[
 \widehat X_{n}(T)=\bigl(\widehat X_{n, i}(T)\bigr)_{i=1,\dots,7}.
\]
Note that $\widehat X_{n}(T) = u_n(W(\tau_1/n),W(2\tau_1/n),\dots, W(\tau_1))$ for some function
$u_n\colon\R^n\to \R^7$.

\begin{theorem}
\label{Tt3}  Let $q\in[0,p)$. Then there exists $c\in (0,\infty)$ such that for all $n\in\N$,
\[
\bigl(\EE
    \bigl[|
      X( T ) -\widehat X_n(T)
    |^q\bigr]\bigr)^{\tfrac{1}{q}}\leq   \frac{c}{n}.
\]
\end{theorem}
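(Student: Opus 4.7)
The plan is to reduce the full error to the single Gaussian difference $X_2(T)-\widehat X_{n,2}(T)$ and then transfer its $L^r$-smallness to the remaining components via mean value estimates. First, I would note from \eqref{solution2} and \eqref{87} that $X_i(T)=\widehat X_{n,i}(T)$ exactly for $i\in\{1,4,6\}$, so the triangle inequality reduces the claim to bounding $\bigl(\EE[|X_i(T)-\widehat X_{n,i}(T)|^q]\bigr)^{1/q}\le c/n$ separately for each $i\in\{2,3,5,7\}$.

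For component $2$, It\^o's product rule applied to $f(t)W(t)$, together with $f(\tau_1)=0$ (from $f\in C^\infty(\R,\R)$ and $\{f\neq0\}\subseteq(-\infty,\tau_1]$) and $W(0)=0$, yields
\[
X_2(T)=-\int_0^{\tau_1}\!f'(t)\,W(t)\,dt,\qquad \widehat X_{n,2}(T)=-\int_0^{\tau_1}\!f'(t)\,\overline W_n(t)\,dt,
\]
so $X_2(T)-\widehat X_{n,2}(T)=-\int_0^{\tau_1} f'(t)(W(t)-\overline W_n(t))\,dt$ is a centered Gaussian. Splitting into the independent Brownian-bridge integrals on each subinterval $[t_{i-1},t_i]$ and using the boundedness of $f'$ gives $\mathrm{Var}(X_2(T)-\widehat X_{n,2}(T))=O(1/n^2)$, hence by the equivalence of Gaussian moments
\[
\bigl(\EE[|X_2(T)-\widehat X_{n,2}(T)|^r]\bigr)^{1/r}\le c_r/n \qquad\text{for every }r\in[1,\infty).
\]
Crucially, the same representation identifies $\widehat X_{n,2}(T)=\EE[X_2(T)\mid W(t_1),\dots,W(t_n)]$, because the Brownian bridge has a linear conditional mean; Jensen's inequality then yields $\sigma_n^2:=\mathrm{Var}(\widehat X_{n,2}(T))\le\mathrm{Var}(X_2(T))=1$ uniformly in $n$, and therefore $\sup_{n\in\N}\EE[\exp(\alpha\widehat X_{n,2}^2(T))]<\infty$ for every $\alpha<1/2$.

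For component $3$ the factorization $X_3(T)-\widehat X_{n,3}(T)=(X_2(T)-\widehat X_{n,2}(T))(X_2(T)+\widehat X_{n,2}(T))$ and Cauchy-Schwarz reduce matters to the Gaussian estimates above. For component $5$ the elementary bound $|e^a-e^b|\le|a-b|(e^a+e^b)$ combined with the same factorization of $X_2^2-\widehat X_{n,2}^2$ and a triple H\"older inequality suffices, the exponential factor being harmlessly controlled by the uniform exponential moments just recorded. For the decisive component $7$, the mean value theorem applied to $G$ gives $|G(X_2(T))-G(\widehat X_{n,2}(T))|\le|G'(\xi)|\cdot|X_2(T)-\widehat X_{n,2}(T)|$ for some $\xi$ between $X_2(T)$ and $\widehat X_{n,2}(T)$, and a direct computation from \eqref{60} shows $|G'(\xi)|\le C(1+|\xi|)\exp(\xi^2/(2p))$. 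Bounding $|\xi|\le|X_2(T)|+|\widehat X_{n,2}(T)|$ and absorbing the linear factor via $(1+|x|)\le C_\eps\exp(\eps x^2)$, I would apply H\"older with exponents $r_1,r_2>1$, $1/r_1+1/r_2=1$, splitting the expectation into an $L^{qr_1}$-norm of $X_2(T)-\widehat X_{n,2}(T)$ and an exponential moment of the form $\EE[\exp(qr_2(1/(2p)+\eps)\widehat X_{n,2}^2(T))]$ (with an analogous piece for $X_2(T)$).

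The critical point, and the only place where the hypothesis $q<p$ enters, is the uniform finiteness of this last exponential moment. Thanks to $\sigma_n^2\le 1$, it is uniformly bounded whenever $qr_2(1+2p\eps)<p$, which admits some $r_2>1$ precisely when $q<p$, provided $\eps>0$ is chosen small enough. Collecting the four component estimates then produces the required bound $\bigl(\EE[|X(T)-\widehat X_n(T)|^q]\bigr)^{1/q}\le c/n$.
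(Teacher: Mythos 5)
Your proposal is correct and follows the same basic route as the paper: identify $X_i(T)=\widehat X_{n,i}(T)$ for $i\in\{1,4,6\}$, show $X_2(T)-\widehat X_{n,2}(T)=-\int_0^{\tau_1}f'(t)(W(t)-\overline W_n(t))\,dt$ is centered Gaussian with variance $O(n^{-2})$, and transfer this to the components $3,5,7$ through growth bounds on the derivative of the transfer map together with Gaussian exponential moments, the restriction $q<p$ entering exactly where the $\exp(x^2/(2p))$ growth of $G'$ meets the unit-variance Gaussian $X_2(T)$. The packaging differs: the paper funnels all four components through one lemma (Lemma~\ref{lemma5}), applied to the independent decomposition $X_2(T)=Z_n+Y_n$ (interpolation part plus bridge part) and using the Cauchy--Schwarz bound $(v_1|U_1|+v_2|U_2|)^2\le U_1^2+U_2^2$ under $v_1^2+v_2^2\le 1$, whereas you argue componentwise and obtain the uniform bound $\mathrm{Var}(\widehat X_{n,2}(T))\le 1$ via the conditional-expectation/Jensen observation, then take exponential moments of $X_2(T)$ and $\widehat X_{n,2}(T)$ separately; both mechanisms are sound and yield the same admissible range $q<p$ (the paper's Lemma~\ref{Teight} in fact allows $r_5<2p$ for the fifth component, which is more than Theorem~\ref{Tt3} needs). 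One point to tighten in your component-$7$ step: after the mean value theorem you should use $\xi^2\le\max\bigl(X_2^2(T),\widehat X_{n,2}^2(T)\bigr)$, which gives $\exp(s\xi^2)\le\exp(sX_2^2(T))+\exp(s\widehat X_{n,2}^2(T))$ and hence exactly the condition $qr_2(1+2p\varepsilon)<p$ you state; if instead you literally carry the cruder bound $|\xi|\le|X_2(T)|+|\widehat X_{n,2}(T)|$ into the quadratic exponent, the inevitable factor from $(a+b)^2\le 2a^2+2b^2$ (plus the correlation between the two variables) would shrink the admissible range of $q$ well below $p$. Since $\xi$ lies between the two points, the max-bound is free, so this is a matter of wording rather than a gap.
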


\section{Proof of Theorem \ref{t1}}\label{proof1}
For the proof of Theorem~\ref{t1} we employ the following lemma, which is a straightforward generalization of Lemma 4.1 in  \cite{JMGY15}. 
\begin{lemma}\label{symm}
Let
$
  (\Omega_1, \mathcal{A}_1)
$
and
$
  (\Omega_2, \mathcal{A}_2)
$
be measurable spaces
and let
$
  V_1 \colon \Omega\to \Omega_1
$
and
$
  V_2, V_2', V_2''\colon$ $\Omega\to \Omega_2
$
be random variables such that
\begin{equation}
\label{eq:symm_ass}
  \PP_{ (V_1, V_2) } =
  \PP_{ (V_1, V_2') } =
  \PP_{ (V_1, V_2'') }
  \,
  .
\end{equation}
Then
for all $q\in[1, \infty)$  and for all measurable mappings
$
  \Phi \colon \Omega_1\times\Omega_2\to \R
$
and
$
  \varphi\colon \Omega_1\to\R,
$

\[
  \Bigl(\EE\big[
    |\Phi(V_1,V_2)- \varphi(V_1)|^q
  \big]\Bigr)^{\frac{1}{q}}
  \ge
  \frac{ 1 }{ 2 }
  \,
  \Bigl(\EE\big[
    | \Phi( V_1, V_2' ) - \Phi( V_1, V_2'' ) |^q
  \big]\Bigr)^{\frac{1}{q}}
  .
\]

\end{lemma}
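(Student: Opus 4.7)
The plan is to prove this symmetrization-type inequality by combining the triangle inequality in $L^q$ with the distributional equalities from the hypothesis. The key observation is that the auxiliary copies $V_2'$ and $V_2''$ serve to symmetrize $\Phi(V_1, V_2)$ around $\varphi(V_1)$.

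First I would apply the triangle inequality for the $L^q(\Omega, \PP)$-norm to the identity
\[
\Phi(V_1, V_2') - \Phi(V_1, V_2'') = \bigl(\Phi(V_1, V_2') - \varphi(V_1)\bigr) - \bigl(\Phi(V_1, V_2'') - \varphi(V_1)\bigr),
\]
which gives
\[
\bigl(\EE[|\Phi(V_1, V_2') - \Phi(V_1, V_2'')|^q]\bigr)^{1/q} \leq \sum_{j\in\{',\,''\}}\bigl(\EE[|\Phi(V_1, V_2^{j}) - \varphi(V_1)|^q]\bigr)^{1/q}.
\]

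Next I would use the distributional equality \eqref{eq:symm_ass}. Consider the measurable mapping $\Psi\colon \Omega_1\times\Omega_2\to\R$ defined by $\Psi(\omega_1,\omega_2) = \Phi(\omega_1,\omega_2) - \varphi(\omega_1)$. Since $\PP_{(V_1,V_2)} = \PP_{(V_1,V_2')} = \PP_{(V_1,V_2'')}$, the pushforward measures under $\Psi$ coincide, so the three random variables $\Psi\circ(V_1,V_2)$, $\Psi\circ(V_1,V_2')$ and $\Psi\circ(V_1,V_2'')$ are identically distributed. In particular, all three have the same $L^q$-norm, i.e.
\[
\bigl(\EE[|\Phi(V_1, V_2') - \varphi(V_1)|^q]\bigr)^{1/q} = \bigl(\EE[|\Phi(V_1, V_2'') - \varphi(V_1)|^q]\bigr)^{1/q} = \bigl(\EE[|\Phi(V_1, V_2) - \varphi(V_1)|^q]\bigr)^{1/q}.
\]

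Finally, substituting these equalities into the triangle-inequality bound and dividing by $2$ yields the claim. There is no real obstacle here: once one is careful that both sides of the distributional equality are defined on the product space $\Omega_1\times\Omega_2$ and that $(\omega_1,\omega_2)\mapsto \Phi(\omega_1,\omega_2)-\varphi(\omega_1)$ is measurable (which follows from measurability of $\Phi$ and $\varphi$), the argument is a two-line computation. The slight generalization over Lemma~4.1 of \cite{JMGY15} is the replacement of the $q=1$ case by general $q\in[1,\infty)$, which requires only that the triangle inequality still holds in $L^q$; this is why the exponent $1/q$ appears symmetrically on both sides.
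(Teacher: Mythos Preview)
Your argument is correct and essentially identical to the paper's own proof: both use the distributional hypothesis to identify the three $L^q$-norms $\bigl(\EE[|\Phi(V_1,V_2^{(\cdot)})-\varphi(V_1)|^q]\bigr)^{1/q}$ and then apply the triangle (Minkowski) inequality, differing only in the order of these two steps.
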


We start with the proof of Theorem \ref{t1}. 
Let $n\in \N$ and $s_1, \ldots, s_n\in [0,T]$. Clearly, there exist $0\le t_0 < t_1 \le T$ such that
\begin{equation}\label{Tpr1}
[t_0,t_1]\subset [0,\tau_1/2],\quad (t_0,t_1)\cap\{s_1,\dots,s_n\} = \emptyset,\quad t_1-t_0 = \tfrac{\tau_1}{2(n+1)}.
\end{equation}
Define  processes
$ \overline{W}, B \colon [ t_0, t_1 ] \times \Omega \to \R $
and
$ \widetilde{W} \colon \big( [ 0, t_0 ] \cup [ t_1, T ] \big) \times \Omega \to \R $
    by
\[
  \overline{W}( t ) = \frac{ (t - t_0) }{ ( t_1 - t_0 ) }
  \cdot
  W( t_1 ) + \frac{ ( t_1 - t ) }{ ( t_1 - t_0 ) } \cdot W( t_0 )
  ,
  \qquad
  B( t )
  = W( t ) - \overline{W}( t )
\]
   for $ t \in [ t_0, t_1 ] $   
and by
$
  \widetilde{W}( t ) = W( t )
$
for 
$ t \in [ 0, t_0 ] \cup [ t_1, T ] $. Moreover, let
\begin{align*}
Y_1&=-\int_{0}^{t_0} f'(s)\cdot W(s)\, ds-\int_{t_0}^{t_1} f'(s)\cdot \overline W(s)\, ds-\int_{t_1}^{\tau_1} f'(s)\cdot W(s)\, ds, \\
Y_2&=-\int_{t_0}^{t_1} f'(s)\cdot B(s)\, ds.
\end{align*}
By It\^{o}'s formula and~\eqref{assum1} we have $\PP$-a.s. 
\begin{equation}\label{Tpr1a}
Y_1+Y_2= \int_0^{\tau_1} f(s)\, dW(s).
\end{equation}
 Hence, by~\eqref{solution2}, $\PP$-a.s.
\begin{equation}\label{Tpr2}
X_7(T) = G(Y_1+Y_2),
\end{equation}
where $G\colon \R\to\R$ is given by \eqref{60}.

Let $u\colon\R^n\to\R$ be a  measurable mapping. Using~\eqref{Tpr2} we obtain 
\begin{equation}\label{Tpr3}
\EE
    \bigl[|
       X_7( T )-u(W(s_1), \ldots, W(s_n))
    |^p\bigr]=\EE
    \bigl[|
      G(Y_1+Y_2) -u(W(s_1), \ldots, W(s_n))
    |^p\bigr].
\end{equation}
The first two statements in~\eqref{Tpr1} imply that  
there exist measurable functions
$
  \Phi_1, \varphi \colon
  C\big(
    [ 0, t_0 ] \cup [ t_1 , T ] , \R
  \big) \to \R
$ and  $
  \Phi_2 \colon
  C\big(
    [t_0, t_1] , \R
  \big) \to \R
$
such that 
\[
  Y_1 =
  \Phi_1(
    \widetilde{W}
 ), \quad Y_2=\Phi_2(B), \quad u(W(s_1), \ldots, W(s_n))=\varphi(\widetilde{W}).
\] 
Moreover, $\widetilde W$ and $B$ are independent and $B$ has a symmetric distribution, which yields
\[
  \PP_{ ( \widetilde W, B ) }
=
  \PP_{ ( \widetilde W, - B ) }.
\]
We may thus apply Lemma \ref{symm} with $
  \Omega_1 = C( [0, t_0] \cup [t_1 , T] , \R )
$,
$
  \Omega_2 = C( [t_0, t_1] , \R )
$, $V_1=\widetilde{W}$, $V_2=V_2'=B$, $V_2''=-B$, $\Phi(\tilde w,b)=G(\Phi_1(\tilde w)+\Phi_2(b))$ for $(\tilde w,b)\in\Omega_1\times \Omega_2$ and $\varphi$ as above, and observing the fact that $\Phi_2(-B)=-\Phi_2(B)$ we conclude that 
\begin{equation}\label{Tpr4}
\EE
    \bigl[|
      G(Y_1+Y_2) -u(W(s_1), \ldots, W(s_n))
    |^p\bigr]\geq \tfrac{1}{2^p}\, \EE
    \bigl[|
      G(Y_1+Y_2) -G(Y_1-Y_2)
    |^p\bigr].
\end{equation}

For the analysis of the right hand side in~\eqref{Tpr4} we first collect useful properties of the random variables $Y_1$ and $Y_2$ and the function $G$.  

Clearly, $Y_1$ and $Y_2$ are centered normal Gaussian variables. Moreover,  
independence of $\widetilde W$ and $B$ implies independence of $Y_1$ and $Y_2$.   
Let $\sigma_1^2$ and $\sigma_2^2$ denote the variances of $Y_1$ and $Y_2$, respectively. Due to \eqref{Tpr1a} and the fact that $\int_0^{\tau_1}f^2(t)dt=1$, see~\eqref{assum3}, we then have
\begin{equation}\label{Tpr5}
\sigma_1^2+\sigma_2^2=1.
\end{equation}
Put 
\[
\alpha=\inf_{t\in[0, \tau_1/2]} |f'(t)|^2, \quad \beta=\sup_{t\in[0, \tau_1/2]} |f'(t)|^2
\]
and note that $0< \alpha \le \beta <\infty$, due to~\eqref{assum2}. Since 
\[
\sigma_2^2 = \int_{t_0}^{t_1} \int_{t_0}^{t_1}f'(s)\cdot f'(t)\cdot \tfrac{(t_1-\max(s,t))(\min(s,t)-t_0)}{t_1-t_0}\, ds\,dt
\]
and $[t_0,t_1]\subset [0,\tau_1/2]$ we conclude that
\begin{equation}\label{Tpr6}
\alpha\cdot \tfrac{(t_1-t_0)^3}{12}\leq\sigma_2^2\leq \beta\cdot \tfrac{(t_1-t_0)^3}{12}.
\end{equation}
Put
\[
n_0=\big\lceil\tfrac{\tau_1}{2}\cdot \bigl(\tfrac{\beta}{6}\bigr)^{1/3}-1\big\rceil.
\]
Using~\eqref{Tpr1},~\eqref{Tpr5} and~\eqref{Tpr6} we obtain that if $n\ge n_0$ then 
\begin{equation}\label{Tpr7}
\tfrac{\alpha\cdot \tau_1^3}{96(n+1)^3} \le \sigma_2^2 \le 1/2 \le \sigma_1^2,\quad \sigma_1^{-2}-1 =\tfrac{\sigma_2^2}{\sigma_1^2}\le \tfrac{\beta\cdot  \tau_1^3}{48(n+1)^3}.
\end{equation}

Clearly, for all $x\ge 1$,
\begin{equation}\label{Tpr8}
G^p(x) =\tfrac{ \exp\bigl(
  x^2/2\bigr)}{(1+x^2)^{\frac{1}{2}}\cdot \ln^{2} (2+x^2)}\geq \tfrac{ \exp\bigl(
  x^2/2\bigr)}{\sqrt 2x\cdot \ln^{2} (3x^2)}.
\end{equation}
Moreover, $G$ is differentiable on $\R$ with
\begin{equation}\label{Tpr9}
G'(x)=\tfrac{x}{p}\cdot G(x)\cdot
  \bigl(1-\tfrac{1}{(1+x^2)}-\tfrac{ 4}{(2+x^2)\cdot \ln(2+x^2)}\bigr).
\end{equation}
Hence, for all $x\geq 3$,
\begin{equation}\label{Tpr10}
G'(x)\geq\tfrac{x}{2p}\cdot G(x)>0.
\end{equation}

Clearly, we may assume that $n\ge \max(3,n_0)$. Let $y_1\in [n^\frac{3}{2},2n^\frac{3}{2}]$ and $y_2\in [0,\sigma_2]$. Then
$y_1+y_2 \ge y_1-y_2 \ge n^\frac{3}{2}-\sigma_2\ge 3^\frac{3}{2} - 1 >3$, due to~\eqref{Tpr5}. Hence, by~\eqref{Tpr10},
\begin{align*}
|G(y_1+y_2)-G(y_1-y_2)| & \ge \int_{y_1}^{y_1+y_2} G'(x)\, dx \ge \tfrac{1}{2p}y_1\cdot y_2\cdot G(y_1),
\end{align*}
which jointly with~\eqref{Tpr8} yields
\[
|G(y_1+y_2)-G(y_1-y_2)|^p \ge \tfrac{1}{(2p)^p}\, y_1^{p-1}\cdot y_2^p\cdot \tfrac{\exp(y_1^2/2)}{\sqrt{2}\cdot \ln^2(3y_1^2)}.
\]
Employing~\eqref{Tpr5} and ~\eqref{Tpr7}  we conclude that
\begin{equation}\label{Tpr11}
\begin{aligned}
& \EE\bigl[|G(Y_1+Y_2)-G(Y_1-Y_2)|^p\bigr] \\
& \qquad\qquad \ge \frac{1}{(2p)^p 2^{\frac{3}{2}} \pi}\cdot\frac{1}{\sigma_1\sigma_2}\int_{n^{\frac{3}{2}}}^{2n^\frac{3}{2}}\int_0^{\sigma_2} \frac{y_1^{p-1}y_2^p}{\ln^2(3y_1^2)}\cdot\exp\Bigl(-\frac{y_2^2}{2\sigma_2^2}-\frac{y_1^2}{2}(\sigma_1^{-2}-1)\Bigr) \, dy_2\, dy_1\\
&  \qquad\qquad \ge \frac{1}{(2p)^p 2^{\frac{3}{2}} \pi}\cdot \frac{\sigma_2^{p}}{(p+1)\sqrt{e}}\int_{n^\frac{3}{2}}^{2n^\frac{3}{2}}\frac{y_1^{p-1}}{\ln^2(3y_1^2)}\cdot \exp\Bigl(-\frac{y_1^2}{2}(\sigma_1^{-2}-1)\Bigr) \,  dy_1\\
&  \qquad\qquad \ge \frac{1}{(2p)^p 2^{\frac{3}{2}} \pi}\cdot \frac{\sigma_2^{p}}{(p+1)\sqrt{e}}\cdot
\frac{n^{\frac{3p}{2}}}{\ln^2(12n^3)}\cdot \exp(-2n^3(\sigma_1^{-2}-1))\\
&  \qquad\qquad \ge \frac{1}{(2p)^p (p+1) 2^{\frac{3}{2}} \pi\sqrt{e}}\cdot \Bigl(\frac{\tau_1^3\alpha}{96}\Bigr)^{\frac{p}{2}}\cdot \Bigl(\frac{n}{n+1}\Bigr)^{\frac{3p}{2}}\cdot \frac{\exp\bigl(-\tfrac{n^3}{24(n+1)^3}\cdot\beta\tau_1^3\bigr)}{\ln^2(12n^3)}
\\
&  \qquad\qquad \ge \frac{(\tau_1^3\alpha)^{\frac{p}{2}}}{2^{5p+\frac{3}{2}}3^{\frac{p}{2}} p^p (p+1)  \pi\sqrt{e}}\cdot  \exp\bigl(-\tfrac{\beta\tau_1^3}{24}\bigr)\cdot \frac{1}{\ln^2(12n^3)}.
\end{aligned}
\end{equation}
Now combine~\eqref{Tpr3},~\eqref{Tpr4} and~\eqref{Tpr11} to complete the proof of Theorem~\ref{t1}.

\section{Proof of Theorems \ref{lt2} and~\ref{Tt3}} \label{proof2}

As  technical tools 
for the proof of Theorems \ref{lt2} and~\ref{Tt3} 
we employ the following two results for  centered Gaussian random variables.

\begin{lemma}\label{lemma2} 
For every $q\in [0,\infty)$ there exists $\kappa_q\in (0,\infty)$ such that for every 
random variable $Z\sim\mathcal{N}( 0,  \sigma^2)$ with $\sigma^2\in [0, \tfrac{1}{4}]$ and every 
$a\in [0,\infty)$,
\[
\EE \bigl[|Z|^q \cdot  \exp\bigl(a\cdot|Z|+|Z|^2\bigr)\bigr]\leq  \kappa_q\cdot \sigma^q\cdot (1+a^q)\cdot \exp(a^2\cdot\sigma^2).
\]
\end{lemma}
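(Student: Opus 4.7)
The plan is to compute the expectation directly from the Gaussian density, then complete the square in the resulting exponent, exploit the assumption $\sigma^2\le 1/4$ to control the variance of the shifted Gaussian, and finally recognize that what remains is a Gaussian moment of order $q$.

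First I would write, using the density of $Z$ and symmetry,
\[
\EE \bigl[|Z|^q \cdot  \exp\bigl(a\cdot|Z|+|Z|^2\bigr)\bigr]
= \frac{2}{\sqrt{2\pi}\,\sigma}\int_0^\infty z^q\exp\bigl(az - \bigl(\tfrac{1}{2\sigma^2}-1\bigr)z^2\bigr)\,dz.
\]
The hypothesis $\sigma^2\in[0,1/4]$ is exactly what makes $\tfrac{1}{2\sigma^2}-1\ge 1>0$, so I may set $\widetilde\sigma^2 := \sigma^2/(1-2\sigma^2)$, for which $\tfrac{1}{2\widetilde\sigma^2}=\tfrac{1}{2\sigma^2}-1$ and $\widetilde\sigma^2\le 2\sigma^2$, and moreover $\widetilde\sigma^2/2\le\sigma^2$ (this last inequality is the crucial point that produces the factor $\exp(a^2\sigma^2)$ on the right-hand side).

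Next I would complete the square: $az-z^2/(2\widetilde\sigma^2) = -\frac{1}{2\widetilde\sigma^2}(z-a\widetilde\sigma^2)^2 + \tfrac{1}{2}a^2\widetilde\sigma^2$. After substituting $y=z-a\widetilde\sigma^2$ this yields
\[
\EE \bigl[|Z|^q \cdot  \exp\bigl(a\cdot|Z|+|Z|^2\bigr)\bigr]
\le \frac{2}{\sqrt{2\pi}\,\sigma}\exp\bigl(\tfrac{1}{2}a^2\widetilde\sigma^2\bigr)\int_{\R}(|y|+a\widetilde\sigma^2)^q\exp\bigl(-y^2/(2\widetilde\sigma^2)\bigr)\,dy,
\]
where I have just enlarged the lower limit to $-\infty$. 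Using the standard inequality $(u+v)^q\le C_q(u^q+v^q)$ for $u,v\ge 0$ (with $C_q=1$ if $q\le 1$ and $C_q=2^{q-1}$ otherwise), the right-hand integral splits into two Gaussian integrals against $\mathcal{N}(0,\widetilde\sigma^2)$, giving a bound of the form
\[
C_q\sqrt{2\pi}\,\widetilde\sigma\bigl(m_q\widetilde\sigma^q + a^q\widetilde\sigma^{2q}\bigr),
\]
where $m_q$ is the $q$-th absolute moment of a standard normal.

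Finally I would plug in the two key bounds arising from $\sigma^2\le 1/4$: namely $\widetilde\sigma^2/2\le\sigma^2$ to replace $\exp(\tfrac{1}{2}a^2\widetilde\sigma^2)$ by $\exp(a^2\sigma^2)$, and $\widetilde\sigma\le\sqrt{2}\,\sigma$ together with $\sigma\le 1/2$ to absorb $\widetilde\sigma^{2q}$ into $\sigma^q$ via $\sigma^{2q}\le 2^{-q}\sigma^q$. Collecting constants then gives
\[
\EE \bigl[|Z|^q \cdot  \exp\bigl(a\cdot|Z|+|Z|^2\bigr)\bigr]
\le \kappa_q\cdot\sigma^q\cdot(1+a^q)\cdot\exp(a^2\sigma^2)
\]
for an explicit $\kappa_q$. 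The only slightly delicate point is verifying that $\sigma^2\le 1/4$ really does yield both $\widetilde\sigma^2/2\le\sigma^2$ and the boundedness of $\widetilde\sigma^2$; once this is checked the rest is bookkeeping with elementary Gaussian integrals.
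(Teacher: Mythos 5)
Your proposal is correct and follows essentially the same route as the paper's proof: writing the expectation against the Gaussian density, completing the square with the effective variance $\sigma^2/(1-2\sigma^2)$, and using $\sigma^2\le 1/4$ to get $1-2\sigma^2\ge 1/2$ and $\sigma^2/(1-2\sigma^2)\le 2\sigma^2$, which produces the factor $\exp(a^2\sigma^2)$ and the $\sigma^q(1+a^q)$ bound. The only cosmetic difference is that you extend the integral to all of $\R$ and split via $(u+v)^q\le C_q(u^q+v^q)$, whereas the paper recognizes the shifted integral as a $q$-th absolute moment of a noncentral Gaussian before applying the same elementary bound; both are sound.
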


\begin{proof}
Let $q, a\in [0,\infty)$ and let $Z\sim\mathcal{N}( 0,  \sigma^2)$ with $\sigma^2\in [0, \tfrac{1}{4}]$. Without loss of generality we may assume that  $\sigma^2>0$. Let $V\sim \mathcal{N}( 0, 1)$.
 Then
\begin{align*}
 \EE \bigl[|Z|^q \cdot  \exp\bigl(a\cdot|Z|+|Z|^2\bigr)\bigr]
& =\tfrac{\sqrt 2}{\sqrt{\pi}\sigma}\cdot\int_{0}^{\infty} x^q\cdot \exp\bigl(a x+x^2-\tfrac{x^2}{2\sigma^2}\bigr)\,dx\\
& =\tfrac{\sqrt 2  }{\sqrt{\pi}\sigma}\cdot \exp\Bigl(\tfrac{a^2\sigma^2}{2-4\sigma^2}\Bigr)\cdot\int_{0}^\infty x^q\cdot \exp\bigl(-\tfrac{1-2\sigma^2}{2\sigma^2}\cdot\bigl(x-\tfrac{a\sigma^2}{1-2\sigma^2}\bigr)^2\bigr)\,dx\\
& \le \exp\bigl(\tfrac{a^2\sigma^2}{2-4\sigma^2}\bigr)\cdot \tfrac{2  }{\sqrt{1-2\sigma^2}}\cdot
\EE\bigl[\bigl|\tfrac{\sigma}{\sqrt{1-2\sigma^2}} \cdot V + \tfrac{a\sigma^2}{1-2\sigma^2}\bigr|^q \bigr]\\
& \le \exp\bigl(\tfrac{a^2\sigma^2}{2-4\sigma^2}\bigr)\cdot\tfrac{2^{q+1} \sigma^q }{(1-2\sigma^2)^{\frac{q+1}{2}}}\cdot \bigl(\EE[|V|^q] + a^q\tfrac{\sigma^q}{(1-2\sigma^2)^{\frac{q}{2}}}\bigr).
\end{align*}
Note that  $\sigma^2\le 1/4$ implies $1-2\sigma^2\ge 1/2$ as well as $\sigma^2/(1-2\sigma^2) \le 1/2$,
which finishes the proof of the lemma. 
\end{proof}

\begin{lemma}\label{lemma5}
Let $q\in[0,\infty)$ and $r\in [0,\tfrac{1}{2q})$ and let $H\in C^1(\R,\R)$ with
\begin{equation}\label{l90}
\sup_{x\in\R}|H'(x)|\cdot \exp(-q\cdot x^2) < \infty.
\end{equation}
Then there exists $\kappa\in (0,\infty)$ such that for all  independent random variables $V_1\sim\mathcal{N}( 0,  v_1^2)$, $V_2\sim\mathcal{N}( 0,  v_2^2)$ with $v_1^2+v_1^2 \le 1$, 
\[
\EE\bigl[|H(V_1+V_2)-H(V_1)|^r\bigr]\leq \kappa\cdot v_2^{r}.
\]
\end{lemma}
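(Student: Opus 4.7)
The plan is to reduce $|H(V_1+V_2)-H(V_1)|$ to Gaussian exponential moments via the fundamental theorem of calculus, and then to handle the two endpoints of the parameter interval separately.

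Setting $M:=\sup_{x\in\R}|H'(x)|\exp(-qx^2)<\infty$, the identity $H(V_1+V_2)-H(V_1)=V_2\int_0^1 H'(V_1+tV_2)\,dt$ yields
\[
|H(V_1+V_2)-H(V_1)|\le M|V_2|\exp\bigl(q\sup_{t\in[0,1]}(V_1+tV_2)^2\bigr).
\]
Since $t\mapsto(V_1+tV_2)^2$ is a non-negative parabola in $t$, its maximum on $[0,1]$ is attained at an endpoint, so $\sup_{t\in[0,1]}(V_1+tV_2)^2=\max(V_1^2,(V_1+V_2)^2)$. Combining with the elementary bounds $\exp(\max(A,B))\le\exp(A)+\exp(B)$ and $(a+b)^r\le c_r(a^r+b^r)$ for $c_r:=\max(1,2^{r-1})$, I obtain
\[
|H(V_1+V_2)-H(V_1)|^r\le c_r M^r |V_2|^r\bigl(\exp(rqV_1^2)+\exp(rq(V_1+V_2)^2)\bigr).
\]
The first summand on the right is easy: by independence of $V_1$ and $V_2$, the expectation factors, and a standard Gaussian moment bound together with the chi-squared moment generating function gives $\EE[|V_2|^r\exp(rqV_1^2)]\le\tilde\kappa_r\, v_2^r(1-2rq)^{-1/2}$, using that $r<1/(2q)$ and $v_1^2\le 1$ imply $2rqv_1^2<1$.

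The main obstacle is bounding the second summand $I_2:=\EE[|V_2|^r\exp(rq(V_1+V_2)^2)]$, because $V_2$ and $U:=V_1+V_2$ are correlated and a naive Cauchy-Schwarz would give the result only in the strictly smaller range $r<1/(4q)$. My plan is to decouple them through the orthogonal Gaussian decomposition $V_2=\alpha U+Z$, where $\alpha:=v_2^2/(v_1^2+v_2^2)\in[0,1]$ and $Z\sim\mathcal{N}(0,v_1^2 v_2^2/(v_1^2+v_2^2))$ is independent of $U\sim\mathcal{N}(0,v_1^2+v_2^2)$. Applying $|V_2|^r\le c_r(\alpha^r|U|^r+|Z|^r)$ and exploiting the independence of $Z$ and $U$ yields
\[
I_2\le c_r\alpha^r\EE[|U|^r\exp(rqU^2)]+c_r\EE[|Z|^r]\EE[\exp(rqU^2)].
\]
For the $Z$-term, $\text{Var}(Z)\le v_2^2$ gives $\EE[|Z|^r]\le\tilde\kappa_r v_2^r$, and $\text{Var}(U)\le 1$ together with $r<1/(2q)$ gives $\EE[\exp(rqU^2)]\le(1-2rq)^{-1/2}$. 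For the $U$-term, rescaling $U=(v_1^2+v_2^2)^{1/2}\widetilde U$ with $\widetilde U\sim\mathcal{N}(0,1)$ and using $v_1^2+v_2^2\le 1$ yields $\EE[|U|^r\exp(rqU^2)]\le(v_1^2+v_2^2)^{r/2}K$ for some constant $K=K(r,q)$, and the key algebraic cancellation
\[
\alpha^r(v_1^2+v_2^2)^{r/2}=\frac{v_2^{2r}}{(v_1^2+v_2^2)^{r/2}}\le v_2^r
\]
(which relies on $v_1^2+v_2^2\ge v_2^2$) reproduces exactly the desired prefactor $v_2^r$. Combining all estimates gives $\EE[|H(V_1+V_2)-H(V_1)|^r]\le\kappa v_2^r$ for some $\kappa=\kappa(r,q)\in(0,\infty)$, which is the claim.
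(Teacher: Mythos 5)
Your proof is correct, but it takes a genuinely different route from the paper. The paper's argument is: bound $|H(y+z)-H(y)|\le c\,|z|\exp\bigl(q(|y|+|z|)^2\bigr)$, write $V_1=v_1U_1$, $V_2=v_2U_2$ with independent standard normals $U_1,U_2$ (so the factor $v_2^r$ appears immediately), and then use the pointwise Cauchy--Schwarz bound $(v_1|U_1|+v_2|U_2|)^2\le (v_1^2+v_2^2)(U_1^2+U_2^2)\le U_1^2+U_2^2$, after which independence makes $\EE\bigl[|U_2|^r\exp(rq(U_1^2+U_2^2))\bigr]$ factor into two finite Gaussian integrals, valid on the whole range $rq<\tfrac12$. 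You instead keep the sharper endpoint bound $\sup_{t\in[0,1]}(V_1+tV_2)^2=\max\bigl(V_1^2,(V_1+V_2)^2\bigr)$ and then must decouple the correlated pair $(V_2,\,V_1+V_2)$, which you do via the Gaussian linear-regression decomposition $V_2=\alpha(V_1+V_2)+Z$ with $\alpha=v_2^2/(v_1^2+v_2^2)$ and $Z$ independent of $V_1+V_2$; the cancellation $\alpha^r(v_1^2+v_2^2)^{r/2}\le v_2^r$ then restores the desired prefactor. Both arguments cover the full range $r<\tfrac{1}{2q}$ and use $v_1^2+v_2^2\le 1$ in the same way; the paper's version is shorter because the rescaling-to-standard-normals step does the decoupling for free, while your projection argument is a somewhat more flexible technique for correlated Gaussian functionals at the cost of more bookkeeping. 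Two small points you should add for completeness: the degenerate case $v_2=0$ (or $v_1^2+v_2^2=0$), where $\alpha$ is undefined, must be dispatched separately (trivially, since then the left-hand side vanishes), and you should state explicitly that $K=\EE\bigl[|\widetilde U|^r\exp(rq\widetilde U^2)\bigr]<\infty$ precisely because $2rq<1$ --- this is where the hypothesis $r<\tfrac{1}{2q}$ enters your $U$-term.
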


\begin{proof}
Let $q\in[0,\infty)$ and $r\in [0,\tfrac{1}{2q})$, let $H\in C^1(\R,\R)$ satisfy~\eqref{l90} and let $V_1\sim\mathcal{N}( 0,  v_1^2)$, $V_2\sim\mathcal{N}( 0,  v_2^2)$ be independent with $v_1^2+v_1^2 \le 1$. Let $U_1$ and $U_2$ be independent standard normal random variables. By the properties of $H$ there exists $c\in (0,\infty)$ such that for all $y,z\in\R$,
\[
|H(y+z)-H(y)|\leq \int_{\min(y,y+z)}^{\max(y,y+z)} |H'(x)|\,dx \leq c\cdot |z|\cdot\exp\bigl(q\cdot
  (|y|+|z|)^2\bigr).
\]
By the latter estimate, the H\"older inequality and the fact that $v_1^2 + v_2^2 \le 1$ we get 
\begin{equation}
\begin{aligned}
\EE\bigl[|H(V_1+V_2)-H(V_1)|^r\bigr] & \leq c^r\cdot v_2^r\cdot \EE\bigl[|U_2|^r\cdot\exp\bigl(r\cdot q\cdot
  (v_1\cdot|U_1|+v_2\cdot|U_2|)^2\bigr)\bigr]\\
  & \leq c^r\cdot v_2^r \cdot\EE\bigl[|U_2|^r\cdot\exp\bigl(r\cdot q\cdot
  (U_1^2+U_2^2)\bigr)\bigr] \\
  & = c^r\cdot v_2^r\cdot\EE\bigl[|U_2|^r\cdot\exp\bigl(r\cdot q\cdot
  U_2^2\bigr)\bigr]\cdot \EE\bigl[\exp\bigl(r\cdot q\cdot
  U_1^2\bigr)\bigr]\\
  & \le c^r\cdot v_2^r\cdot\Bigl(\EE\bigl[(1+|U_2|^r)\cdot\exp\bigl(r\cdot q\cdot
  U_2^2\bigr)\bigr]\Bigr)^2.
\end{aligned}
\end{equation}
Note that $r q< 1/2$ and put $v= (1-2rq)^{-1/2}$. Then
\[
\EE\bigl[(1+|U_2|^r)\cdot\exp\bigl(r\cdot q\cdot
  U_2^2\bigr)\bigr] = \int_{\R} \tfrac{(1+|x|^r)}{\sqrt{2\pi}} \cdot  \exp\bigl(\tfrac{-x^2}{2v^2}\bigr)\, dx = v\cdot\bigl(1+v^r\cdot \EE[|U_1|^r] \bigr),
\]
which completes the proof of the lemma.
\end{proof}

In the sequel we use the following notation. For $n\in\N$ we define $B_{n} \colon [ 0, \tau_1] \times \Omega \to \R $  by 
\[
B_{n}(t)=W(t)-\overline W_{n}(t),\quad t\in [0,\tau_1],
\]
and we put 
\begin{equation}\label{98}
Y_{n}=-\int_0^{\tau_1}f'(t) B_{n}(t)\,dt,\quad Z_n= \widehat X_{n, 2}(T)
\end{equation}
as well as
\[
\sigma_n^2 = \text{Var}(Y_n),\quad \nu_n^2=\text{Var}(Z_n).
\]
By It\^{o}'s formula and~\eqref{assum1} we have $\PP$-a.s. 
\begin{equation}\label{89}
X_2(T)= Z_n+Y_n.
\end{equation}

Let $n\in\N$ and $\ell\in\N$. Then it is easy to check that
\begin{equation}\label{T333} 
Z_{n},\, Z_{\ell n}-Z_{n},\, Y_{\ell n} \text{ are  independent, centered, Gaussian random variables}.
\end{equation}
Moreover, using~\eqref{89} and Lemma~\ref{Tlemma2} we get
\begin{equation}\label{Tvar}
\text{Var}(Y_{\ell n}) + \text{Var}(Z_{\ell n})  = \sigma^2_{\ell n} + \nu_{\ell n}^2 = 1,\quad
\text{Var}(Z_{\ell n}-Z_n)  = \nu^2_{\ell n} - \nu^2_{n},
\end{equation}
and, proceeding as in the proof of~\eqref{Tpr6}, it is easy to see that
\begin{equation}\label{Tvar2}
\sigma_{\ell n}^2 \le \gamma \tfrac{\tau_1^3}{12 \ell^2n^2}, 
\end{equation}
where $\gamma=\sup_{t\in[0, \tau_1]} |f'(t)|^2$.

We are ready to establish a $p$-th mean error estimate for the approximation $\widehat X^\ast_{n,7}(T)$.

\begin{lemma}\label{Tseven}
There exists $c\in (0,\infty)$  such that for all $n\in\N$,
\[
\EE    \bigl[|      X_7( T ) -\widehat X^\ast_{n,7}(T)    |^p\bigr]\leq   \frac{c}{n^p}.
\]
\end{lemma}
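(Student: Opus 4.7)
The plan is to decompose the error over the partition $\{A_\ell\}_{\ell\ge 1}$ with $A_\ell = \{|Z_n|\in[a_\ell,a_{\ell+1})\}$, bound each conditional piece by the mean value theorem together with Lemma~\ref{lemma2}, and then exploit a precise Gaussian cancellation to integrate against $\1_{A_\ell}$.

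By~\eqref{solution2} we have $X_7(T) = G(X_2(T))$; by~\eqref{89}, $X_2(T) = Z_{\ell n}+Y_{\ell n}$; and by construction $\widehat X^\ast_{n,7}(T) = G(Z_{\ell n})$ on $A_\ell$. Hence
\[
\EE[|X_7(T)-\widehat X^\ast_{n,7}(T)|^p] = \sum_{\ell=1}^\infty \EE[|G(Z_{\ell n}+Y_{\ell n})-G(Z_{\ell n})|^p\1_{A_\ell}].
\]
Applying the mean value theorem together with the bound $|G'(x)|\le C(1+|x|)\exp(x^2/(2p))$ (a consequence of~\eqref{Tpr9} and $G(x)\le\exp(x^2/(2p))$) and the identity $(|Z_{\ell n}|+|Y_{\ell n}|)^2 = Z_{\ell n}^2+2|Z_{\ell n}||Y_{\ell n}|+Y_{\ell n}^2$ yields
\[
|G(Z_{\ell n}+Y_{\ell n})-G(Z_{\ell n})|^p\le C|Y_{\ell n}|^p(1+|Z_{\ell n}|+|Y_{\ell n}|)^p\exp(Z_{\ell n}^2/2)\exp(|Z_{\ell n}||Y_{\ell n}|+Y_{\ell n}^2).
\]
Since $Y_{\ell n}$ is independent of $(Z_n,Z_{\ell n})$ by~\eqref{T333}, conditioning on $Z_{\ell n}$ and applying Lemma~\ref{lemma2} with $a=|Z_{\ell n}|$, $\sigma=\sigma_{\ell n}$ (with $\sigma_{\ell n}^2\le 1/4$ for $n$ large by~\eqref{Tvar2}) and $q\in\{p,2p\}$ gives
\[
\EE\bigl[|G(Z_{\ell n}+Y_{\ell n})-G(Z_{\ell n})|^p\bigm|Z_{\ell n}\bigr]\le C\sigma_{\ell n}^p(1+|Z_{\ell n}|^{2p})\exp\bigl(Z_{\ell n}^2(1/2+\sigma_{\ell n}^2)\bigr).
\]

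The main obstacle is that $\mathrm{Var}(Z_{\ell n}) = 1-\sigma_{\ell n}^2$ is only just below $1$ while $1/2+\sigma_{\ell n}^2>1/2$, so $\EE[\exp(Z_{\ell n}^2(1/2+\sigma_{\ell n}^2))]=\infty$ and the indicator $\1_{A_\ell}$ is essential. To handle it I decompose $Z_{\ell n} = Z_n+D_\ell$ into independent centered Gaussians of variances $\nu_n^2$ and $\tilde v_\ell^2 = \nu_{\ell n}^2-\nu_n^2$ (by~\eqref{T333}), condition on $Z_n=z$, and use the Gaussian quadratic moment generating function
\[
\EE[\exp(\alpha(z+D_\ell)^2)] = (1-2\alpha\tilde v_\ell^2)^{-1/2}\exp\bigl(\alpha z^2/(1-2\alpha\tilde v_\ell^2)\bigr),
\]
with $\alpha=1/2+\sigma_{\ell n}^2$. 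Combined with the density $\phi_{Z_n}(z)=(2\pi\nu_n^2)^{-1/2}\exp(-z^2/(2\nu_n^2))$, the computation rests on the algebraic cancellation
\[
\frac{\alpha}{1-2\alpha\tilde v_\ell^2}-\frac{1}{2\nu_n^2} = \frac{\sigma_{\ell n}^2(1-2\sigma_{\ell n}^2)}{2\nu_n^2(1-2\alpha\tilde v_\ell^2)} = O(\sigma_{\ell n}^2),
\]
which follows from $\nu_n^2+\tilde v_\ell^2 = 1-\sigma_{\ell n}^2$. Hence the net quadratic exponent in $z$ is of order $\sigma_{\ell n}^2 z^2$, and on $A_\ell$, where $z^2\le 4\ln(\ell+1)$ and $\sigma_{\ell n}^2\le \gamma\tau_1^3/(12(\ell n)^2)$ by~\eqref{Tvar2}, this exponential factor is bounded by a universal constant.

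Integrating the polynomial factor $(1+|z|)^{2p}\le C(\ln(\ell+1))^p$ over the band of $|z|$ of length $a_{\ell+1}-a_\ell\le C/\ell$ then yields
\[
\EE\bigl[(1+|Z_{\ell n}|^{2p})\exp(Z_{\ell n}^2(1/2+\sigma_{\ell n}^2))\1_{A_\ell}\bigr]\le C(\ln(\ell+1))^p/\ell.
\]
Multiplying by $\sigma_{\ell n}^p\le C/(\ell n)^p$ gives $\EE[|G(Z_{\ell n}+Y_{\ell n})-G(Z_{\ell n})|^p\1_{A_\ell}]\le C(\ln(\ell+1))^p/(\ell^{p+1} n^p)$, and since $\sum_\ell (\ln(\ell+1))^p/\ell^{p+1}<\infty$ for $p\ge 1$, we obtain the bound $\le C/n^p$ for $n$ sufficiently large. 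Finitely many small values of $n$ are absorbed into the constant using the trivial bound $\EE[|X_7(T)-\widehat X^\ast_{n,7}(T)|^p]\le 2^{p-1}(\EE[|X_7(T)|^p]+\EE[|\widehat X^\ast_{n,7}(T)|^p])$ together with the finite-moment statement from Lemma~\ref{Tlemma2}.
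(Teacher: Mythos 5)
Your main estimate (the regime where $\sigma_{\ell n}^2$ is small, i.e.\ $n\ge n_1$) is correct and takes a genuinely different, finer route than the paper. You condition on $Z_{\ell n}$, apply Lemma~\ref{lemma2} once, and then handle the resulting factor $\exp\bigl((\tfrac12+\sigma_{\ell n}^2)Z_{\ell n}^2\bigr)\1_{A_\ell}$ by writing $Z_{\ell n}=Z_n+D_\ell$ and computing the conditional Gaussian quadratic exponential moment explicitly; the algebraic identity
\[
\tfrac{\alpha}{1-2\alpha\tilde v_\ell^2}-\tfrac{1}{2\nu_n^2}=\tfrac{\sigma_{\ell n}^2(1-2\sigma_{\ell n}^2)}{2\nu_n^2(1-2\alpha\tilde v_\ell^2)}
\]
is correct (using $\nu_n^2+\tilde v_\ell^2+\sigma_{\ell n}^2=1$), and since $\sigma_{\ell n}^2=O(1/(\ell^2n^2))$ by~\eqref{Tvar2} while $z^2\le 4\ln(\ell+1)$ on $A_\ell$, the exponential factor is indeed uniformly bounded. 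The paper instead avoids any exact cancellation: it bounds $|Z_{\ell n}|\le|Z_n|+|Z_{\ell n}-Z_n|$ on $A_\ell$, splits the exponential into three factors depending on $Z_n$, $Z_{\ell n}-Z_n$ and $Y_{\ell n}$ separately, uses independence from~\eqref{T333} to factor the expectation, and applies Lemma~\ref{lemma2} twice; your route is computationally sharper, the paper's is cruder but needs only the one-dimensional Lemma~\ref{lemma2}. Minor bookkeeping issues in your write-up (the polynomial degree is $3p$ rather than $2p$ after combining the prefactor with Lemma~\ref{lemma2}, and the factor $|Z_{\ell n}|^{2p}=|z+D_\ell|^{2p}$ inside the conditional expectation needs the same completion-of-squares computation rather than being replaced by $(1+|z|)^{2p}$ outright) only change powers of $\ln(\ell+1)$ and are harmless for summability.

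There is, however, a genuine gap in your treatment of the finitely many small $n$. You invoke the trivial bound $\EE[|X_7(T)-\widehat X^\ast_{n,7}(T)|^p]\le 2^{p-1}(\EE[|X_7(T)|^p]+\EE[|\widehat X^\ast_{n,7}(T)|^p])$ ``together with Lemma~\ref{Tlemma2}'', but Lemma~\ref{Tlemma2} only gives $\EE[|X_7(T)|^p]<\infty$; it says nothing about the estimator, and the finiteness of $\EE[|\widehat X^\ast_{n,7}(T)|^p]=\EE[|G(\widehat X^\ast_{n,2}(T))|^p]=\sum_{\ell\ge1}\EE[|G(Z_{\ell n})|^p\1_{A_\ell}]$ is not obvious. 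In particular, the crude bound $G^p(x)\le C\exp(x^2/2)$ combined with a term-by-term Gaussian computation fails: conditioning on $Z_n=z$ the net quadratic exponent is nonpositive but tends to $0$ as $\ell\to\infty$, so each term is of order $a_{\ell+1}-a_\ell\asymp 1/(\ell\sqrt{\ln\ell})$ and the series diverges. One either has to retain the denominator $(1+x^2)^{1/2}\ln^2(2+x^2)$ of $G^p$ in this computation (which makes the series converge, but only barely), or argue as the paper does: since $Z_{\ell n}=\EE[X_2(T)\mid\mathcal F_{\ell n}]$ and $G\circ\rho$ with $\rho(x)=\max(|x|,3)$ is convex, Jensen's inequality for conditional expectations gives $|G(Z_{\ell n})|^p\1_{A_\ell}\le\EE[|G(\rho(X_2(T)))|^p\1_{A_\ell}\mid\mathcal F_{\ell n}]+C\,\1_{A_\ell}$, whence $\EE[|G(\widehat X^\ast_{n,2}(T))|^p]\le C(\EE[|G(\rho(X_2(T)))|^p]+1)<\infty$ by Lemma~\ref{Tlemma2}; this is the content of~\eqref{T566} in the paper. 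As it stands, your final sentence asserts a nontrivial moment bound without proof, so this step needs to be filled in.
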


\begin{proof}

Let $n\in\N$. Using~\eqref{89} we obtain
\begin{equation}\label{71}
\begin{aligned}
\EE    \bigl[|      X_7( T ) -\widehat X^\ast_{n,7}(T)    |^p\bigr] &  =\EE
    \bigl[|
      G(X_2(T)) -G(\widehat X^*_{n, 2}(T))
    |^p\bigr] \\
    &   =\sum_{\ell=1}^\infty \EE
    \bigl[|
      G(Z_{\ell n}+Y_{\ell n}) -G(Z_{\ell n})
    |^p\cdot 1_{[a_{\ell}, a_{\ell+1})}(|Z_{n}|)\bigr].
    \end{aligned}
\end{equation}
It follows from~\eqref{Tpr9} that there exists $c_1\in (0,\infty)$ such that 
\begin{equation}\label{der1}
|G'(x)| \le c_1 \cdot|x|\cdot \exp(\tfrac{x^2}{2p})
\end{equation}
for all $x\in \R$. Hence, for all $y,z \in\R$, 
\[
|G(z+y)-G(z)|\leq \int_{\min(z,z+y)}^{\max(z,z+y)} |G'(x)|\,dx 
\leq c_1\cdot |y|\cdot (|z|+|y|)\cdot \exp\bigl(\tfrac{1}{2p}
  (|z|+|y|)^2\bigr),
\]
which jointly with \eqref{71} implies
\begin{align*}
& \EE    \bigl[|      X_7( T ) -\widehat X^\ast_{n,7}(T)    |^p\bigr]\\
& \qquad\quad \leq c_1^p\cdot \sum_{\ell=1}^\infty \EE
    \bigl[|Y_{\ell n}|^p\cdot (|Z_{\ell n}|+|Y_{\ell n}|)^p\cdot
      \exp\bigl(\tfrac{1}{2}
  (|Z_{\ell n}|+|Y_{\ell n}|)^2\bigr)\cdot 1_{[a_{\ell}, a_{\ell+1})}(|Z_{n}|)\bigr].
\end{align*}
Note that for all $\ell\in\N$,
\begin{align*}
     &  \exp\bigl(\tfrac{1}{2}
  (|Z_{\ell n}|+|Y_{\ell n}|)^2\bigr)\cdot 1_{[a_{\ell}, a_{\ell+1})}(|Z_{n}|)\\
  &\qquad \qquad \leq \exp\bigl(\tfrac{1}{2}
  |Z_{n}|^2+a_{\ell+1}\cdot(|Z_{\ell n}-Z_{n}|+|Y_{\ell n}|)+|Z_{\ell n}-Z_{n}|^2+|Y_{\ell n}|^2\bigr)\cdot 1_{{[a_{\ell}, a_{\ell+1})}}(|Z_{n}|)
\end{align*}
and
\[
(|Z_{\ell n}|+|Y_{\ell n}|)^p  
\leq 3^p\cdot (1+|Z_{n}|^p)\cdot (1+|Z_{\ell n}-Z_{n}|^p)\cdot (1+|Y_{\ell n}|^p).
\]
Hence, 
\begin{equation}\label{l24}
\EE    \bigl[|      X_7( T ) -\widehat X^\ast_{n,7}(T)    |^p\bigr]\leq (3c_1)^p\cdot \sum_{\ell=1}^\infty \EE
    \bigl[A_{\ell,n}\cdot B_{\ell,n}\cdot C_{\ell,n}\bigr],
\end{equation}
where 
\begin{align*}
A_{\ell,n} &=(1+|Z_{n}|^p)\cdot \exp\bigl(\tfrac{1}{2}
  |Z_{n}|^2\bigr)\cdot 1_{{[a_{\ell}, a_{\ell+1})}}(|Z_{n}|),\\
  B_{\ell,n}&=(1+|Z_{\ell n}-Z_{n}|^p)\cdot\exp\bigl(a_{\ell+1}\cdot |Z_{\ell n}-Z_{n}|+|Z_{\ell n}-Z_{n}|^2\bigr),\\
  C_{\ell,n}&=|Y_{\ell n}|^p\cdot (1+|Y_{\ell n}|^p)\cdot \exp\bigl(a_{\ell+1}\cdot|Y_{\ell n}|+|Y_{\ell n}|^2\bigr)
\end{align*}
for $\ell\in\N$. Observe that~\eqref{T333}  implies that for all $\ell\in\N$,
\begin{equation}\label{T444}
\EE
    \bigl[A_{\ell,n}\cdot B_{\ell,n}\cdot C_{\ell,n}\bigr] = \EE
    \bigl[A_{\ell,n}\bigr]\cdot \EE
    \bigl[ B_{\ell,n}\bigr]\cdot \EE
    \bigl[ C_{\ell,n}\bigr].
\end{equation}

Next, put 
\[
n_1 = \Bigl\lceil \sqrt{\gamma\tau_1^3}\Bigr\rceil.
\]
Using~\eqref{Tvar} and~\eqref{Tvar2} we see that  for all $n\ge n_1$ and $\ell\in\N$,
\begin{equation}\label{T334}
\nu^2_{\ell n} - \nu^2_{n} = \sigma_n^2- \sigma_{\ell n}^2 \le \sigma_n^2 \le \tfrac{\gamma\tau_1^3}{12 n^2} \le \tfrac{1}{12}.
\end{equation}
Using~\eqref{T334}, ~\eqref{Tvar2} 
and Lemma~\ref{lemma2} we thus obtain that there exist $\kappa_0,\kappa_p,\kappa_{2p},c_2,c_3\in (0,\infty)$ such that for all $n\ge n_1$ and $\ell\in\N$,
\[
\EE
    \bigl[ B_{\ell,n}\bigr]  \le \bigl(2\kappa_0 + \kappa_p\cdot (\nu_{\ell n}^2- \nu_n^2)^{\frac{p}{2}}\cdot (1+a_{\ell+1}^p)\bigr)\cdot \exp\bigl(a_{\ell+1}^2\cdot (\nu_{\ell n}^2- \nu_n^2)\bigr) \le c_3\cdot  \ln^{\frac{p}{2}}(\ell+1)
\]
and
\[
\EE
    \bigl[ C_{\ell,n}\bigr]  \le \bigl(\kappa_p\cdot \sigma_{\ell n }^p\cdot  (1+a_{\ell+1}^p) + \kappa_{2p}\cdot \sigma_{\ell n }^{2p}\cdot (1+a_{\ell+1}^{2p})\bigr)\cdot \exp\bigl(a_{\ell+1}^2\cdot \sigma_{\ell n }^{2}\bigr)\le  \tfrac{c_2}{(\ell n)^p} \cdot \ln^{p}(\ell+1)\cdot \ell^{\tfrac{1}{3}}.
\]
Furthermore,~\eqref{Tvar} and~\eqref{T334} jointly imply that $11/12\le\nu_n^2 \le 1$ for all $n\ge n_1$, and therefore there exists $c_4\in (0,\infty)$ such that for all $n\ge n_1$ and $\ell\in\N$,
\begin{align*}
\EE
    \bigl[ A_{\ell,n}\bigr] & = \tfrac{2}{\sqrt{2\pi}}\int_{a_{\ell}/\nu_n}^{a_{\ell+1}/\nu_n} (1+(\nu_n\cdot x)^p)\cdot \exp\bigl(-\tfrac{x^2}{2}\cdot (1-\nu_n^2)\bigr)\,dx\leq  (1+a_{\ell+1}^p)\cdot \tfrac{ a_{\ell+1}-a_{\ell}}{\nu_n}\\& \le \sqrt{\tfrac{12}{11}} 
\bigl( 1 + 2^p\ln^{\frac{p}{2}}(\ell+1)
   \bigr)\cdot \tfrac{2}{\ell\cdot \ln^{\frac{1}{2}}(\ell+1) }\le c_4 \cdot \ln^{\frac{p-1}{2}}(\ell+1) \cdot \tfrac{1}{\ell}.
\end{align*}
Hence, there exists $c_5\in (0,\infty)$ such that for all $n\ge n_1$ and $\ell\in\N$,
\begin{equation}\label{T569}
\begin{aligned}
\EE
    \bigl[A_{\ell,n}\bigr]\cdot \EE
    \bigl[ B_{\ell,n}\bigr]\cdot \EE
    \bigl[ C_{\ell,n}\bigr] \le c_5 \cdot \tfrac{1}{n^p} \cdot\tfrac{\ln^{2p-\frac{1}{2}}(\ell+1)}{\ell^{p+\frac{2}{3}}}. 
\end{aligned}
\end{equation} 

Combining~\eqref{l24},~\eqref{T444} and~\eqref{T569} we conclude that there exists $c_6\in (0,\infty)$ such that for all $n\ge n_1$,
\begin{equation}\label{T2345}
\EE    \bigl[|      X_7( T ) -\widehat X^\ast_{n,7}(T)    |^p\bigr] \le (3c_1)^p \cdot c_5 \cdot \tfrac{1}{n^p} \sum_{\ell=1}^\infty \tfrac{\ln^{2p-\frac{1}{2}}(\ell+1)}{\ell^{p+\frac{2}{3}}} \le c_6 \cdot\tfrac{1}{n^p}.
\end{equation}

In view of Lemma~\ref{Tlemma2} it remains                 
to prove that for all $n<n_1$,
\begin{equation}\label{T566}
\EE
    \bigl[|G(\widehat X^*_{n, 2}(T))
    |^p\bigr] < \infty.
\end{equation}

To this end we define $\rho\colon\R\to [3,\infty)$ by $\rho(x) = \max(|x|,3)$. Clearly, $\rho$ is convex. Moreover, by~\eqref{Tpr9} and~\eqref{Tpr10} we obtain that $G$ as well as $G'$ are increasing on $[3,\infty)$. In particular, $G$ is convex on $[3,\infty)$. Using the monotonicity and convexity of $G$ as well as the convexity of $\rho$ we conclude that $G\circ \rho$ is convex.  

For $n,\ell\in\N$  put $\mathcal F_{\ell n} = \sigma(\{W(i\tau_1/(\ell n))\colon i = 1,\dots, \ell n\})$ and note that $Z_{\ell n} = \EE[X_2(T)|  \mathcal F_{\ell n} ]$. Using the estimate $G(x)\le (\ln(2))^{-2/p}\exp(x^2/2p)$ and the Jentzen inequality we therefore obtain that for all $n,\ell\in\N$
\begin{equation}\label{T111}
\begin{aligned}
& |G(Z_{\ell n})|^p \cdot \1_{[a_{\ell}, a_{\ell+1})}(|Z_{n}|)\\
& \qquad \le  \bigl(|G(\rho(Z_{\ell n}))|^p+(\ln(2))^{-2}\exp(9/2)\bigr) \cdot \1_{[a_{\ell}, a_{\ell+1})}(|Z_{n}|)\\
& \qquad \le \bigl(\EE[|G(\rho(X_2(T)))|^p|  \mathcal F_{\ell n} ] +(\ln(2))^{-2}\exp(9/2)\bigr)\cdot \1_{[a_{\ell}, a_{\ell+1})}(|Z_{n}|)\\
& \qquad = \EE\bigl [|G(\rho(X_2(T)))|^p \1_{[a_{\ell}, a_{\ell+1})}(|Z_{n}|)|  \mathcal F_{\ell n} \bigr] +(\ln(2))^{-2}\exp(9/2)\cdot \1_{[a_{\ell}, a_{\ell+1})}(|Z_{n}|).
\end{aligned}
\end{equation}
Hence there exists $c_7\in (0,\infty)$ such that for all $n\in\N$,
\[
|G(\widehat X^*_{n, 2}(T))|^p \le c_7\cdot \Bigl(\sum_{\ell=1}^\infty  \EE\bigl [|G(\rho(X_2(T)))|^p \1_{[a_{\ell}, a_{\ell+1})}(|Z_{n}|)|  \mathcal F_{\ell n} \bigr] +1\Bigr),
\]
which implies
\[
\EE\bigl[|G(\widehat X^*_{n, 2}(T))|^p\bigr]  \le c_7\cdot \bigl(\EE\bigl[|G(\rho(X_2(T)))|^p\bigr] + 1\bigr).
\]
Finally, note that $G(\rho(X_2(T)))\le G(X_2(T)) + (\ln(2))^{-2/p}\exp(9/2p)$ and apply Lemma~\ref{Tlemma2} to complete the proof of~\eqref{T566}.
\end{proof}

Next, we provide error estimates for the approximations $\widehat X_{n,2}(T),\widehat X_{n,3}(T),\widehat X_{n,5}(T)$ and $\widehat X_{n,7}(T)$. 

\begin{lemma}\label{Teight} Let $r_2,r_3\in (0,\infty)$, $r_5\in (0,2p)$ and $r_7\in (0,p)$. Then there exists $c\in (0,\infty)$ such that for every $i\in\{2,3,5,7\}$ and every $n\in\N$,
\[
\EE\bigl[|X_i(T)-\widehat X_{n,i}(T)|^{r_i}\bigr]\leq \frac{c}{n^{r_i}}.
\]
\end{lemma}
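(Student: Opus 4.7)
The plan is to express each error $X_i(T)-\widehat X_{n,i}(T)$ for $i\in\{2,3,5,7\}$ as an explicit function of the two independent centered Gaussian random variables $Z_n=\widehat X_{n,2}(T)$ and $Y_n=X_2(T)-Z_n$ (independence by~\eqref{T333}), and then to invoke the Gaussian estimates from Lemmas~\ref{lemma2} and~\ref{lemma5}. Throughout I will use that, by~\eqref{Tvar} and~\eqref{Tvar2}, $\nu_n^2+\sigma_n^2=1$ and $\sigma_n^2\le \gamma\tau_1^3/(12n^2)$; in particular $\EE[|Z_n|^s]$ is bounded uniformly in $n$ for every $s\in[0,\infty)$.

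For $i=2$ the error is exactly $|Y_n|$, so a standard Gaussian moment computation yields $\EE[|Y_n|^{r_2}]=c_{r_2}\sigma_n^{r_2}\le c/n^{r_2}$. For $i=3$ I would expand
\[
X_3(T)-\widehat X_{n,3}(T)=(Z_n+Y_n)^2-Z_n^2=2\,Z_nY_n+Y_n^2,
\]
apply $|a+b|^{r_3}\le 2^{r_3}(|a|^{r_3}+|b|^{r_3})$, and split $\EE[|Z_nY_n|^{r_3}]=\EE[|Z_n|^{r_3}]\cdot\EE[|Y_n|^{r_3}]$ by independence; the bounds on $\sigma_n$ and on the moments of $Z_n$ then make both summands of order $n^{-r_3}$.

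For $i\in\{5,7\}$ the error takes the form $H(Z_n+Y_n)-H(Z_n)$ with $H(x)=\exp(x^2/(4p))$ and $H=G$ respectively, so my plan is to invoke Lemma~\ref{lemma5} with $V_1=Z_n$, $V_2=Y_n$; the hypothesis $v_1^2+v_2^2\le 1$ is just~\eqref{Tvar}. For $i=5$ one has $|H'(x)|=(|x|/(2p))\exp(x^2/(4p))$, hence $\sup_{x\in\R}|H'(x)|\exp(-(1/(4p)+\eps)x^2)<\infty$ for every $\eps>0$; choosing $\eps>0$ small enough that $1/(2(1/(4p)+\eps))>r_5$ (possible because $r_5<2p$) delivers the bound $\kappa\sigma_n^{r_5}\le c/n^{r_5}$. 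For $i=7$ I would use~\eqref{der1} in the form $|G'(x)|\le c_1|x|\exp(x^2/(2p))$ and repeat the argument with $q=1/(2p)+\eps$ and $\eps$ small enough that $1/(2q)>r_7$, which is possible because $r_7<p$.

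The only delicate point is the selection of the parameter $q$ in Lemma~\ref{lemma5}: the exponential growth of $H'$ forces $q>1/(4p)$ (resp.\ $q>1/(2p)$), while Lemma~\ref{lemma5} requires $r<1/(2q)$. Both constraints are simultaneously satisfiable precisely within the advertised ranges $r_5<2p$ and $r_7<p$, thanks to the elementary inequality $|x|\le C_\eps\exp(\eps x^2)$, which lets one absorb the linear prefactor in $|H'|$ into $\exp(\eps x^2)$ for arbitrarily small $\eps>0$. This balancing is the only step requiring any care, and it is routine.
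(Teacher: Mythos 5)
Your proof is correct and follows essentially the same route as the paper: decompose $X_2(T)=Z_n+Y_n$ with $Z_n,Y_n$ independent, control $\sigma_n^2=\operatorname{Var}(Y_n)\le\gamma\tau_1^3/(12n^2)$, and for the exponential cases $i=5,7$ invoke Lemma~\ref{lemma5} with the same trade-off between the admissible growth parameter $q$ and the moment order $r$ (absorbing the linear prefactor $|x|$ into $\exp(\eps x^2)$). The only cosmetic difference is that for $i=2,3$ you use direct Gaussian moment computations and the algebraic identity $(Z_n+Y_n)^2-Z_n^2=2Z_nY_n+Y_n^2$, whereas the paper uniformly applies Lemma~\ref{lemma5} with $H_2(x)=x$, $q_2=0$ and $H_3(x)=x^2$, $q_3\in(0,1/(2r_3))$; both treatments are equivalent for those easy cases.
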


\begin{proof}
Let $r_2,r_3\in (0,\infty)$, $r_5\in (0,2p)$ and $r_7\in (0,p)$. Let $n\in\N$. 
In all of the four cases we apply Lemma~\ref{lemma5} with $V_1 = \widehat X_{n,2}(T) = Z_n$ and $V_2=Y_n$, see~\eqref{89}. Thus $\text{Var}(V_1) +  \text{Var}(V_2) = 1$, and according to~\eqref{Tvar2} we have $ \text{Var}(V_2) \le (\gamma\tau_1^3)/(12n^2)$.
Moreover, for $i\in \{2,3,5,7\}$ we use the function $H=H_i$ in Lemma~\ref{lemma5}, where $H_2,H_3,H_5,H_7\colon \R\to \R$ are given by
\[
H_2(x) = x,\quad H_3(x) = x^2,\quad H_5(x) =\exp\bigl(\tfrac{1}{4p}x^2\bigr),\quad H_7(x) =
G(x).
\]
Let 
\[
q_2=0, \quad q_3\in (0,1/(2r_3)),\quad q_5\in (1/(4p),1/(2r_5)),\quad q_7\in (1/(2p),1/(2r_7)).
\]
Employing~\eqref{der1} in the case $i=7$ we then see that there exists $c\in (0,\infty)$ such that for every $i\in\{2,3,5,7\}$ and every $x\in\R$,
\[
|H_i'(x)| \le c\cdot \exp(q_i\cdot x^2),
\]
which completes the proof.
\end{proof}

Clearly, Lemmas~\ref{Tseven} and \ref{Teight} jointly yield the error estimates in Theorems~\ref{lt2} and~\ref{Tt3}.

It remains to establish the cost estimate in Theorem~\ref{lt2}. 

Let $n\in\N$.
Clearly, if $\nu_n=0$ then $\widehat X_{n,2}(T)=Z_n=0$ a.s. and we have $\text{cost}(\widehat X^*_n(T))  = n$ a.s. Next, assume $\nu_n^2 >0$. Using~\eqref{T333} and the fact that $\nu_n^2\le 1$ we get for every $l\in\N$, 
\begin{equation}\label{T767}
\begin{aligned}
\PP(\{|Z_n|\in  [a_\ell, a_{\ell+1})\}) & =\tfrac{2}{\sqrt{2\pi}\nu_n}\int_{a_{\ell}}^{a_{\ell+1}}  \exp\bigl(-\tfrac{x^2}{2\nu_n^2}\bigr)dx\leq \tfrac{2}{\sqrt{2\pi}\nu_n}\cdot  \exp\bigl(-\tfrac{a_l^2}{2\nu_n^2}\bigr)\cdot (a_{\ell+1}-a_{\ell})\\
& \le \tfrac{1}{\nu_n}\cdot \tfrac{\sqrt{2}}{\sqrt{\pi}} \cdot \exp(-2\ln(\ell))\cdot \tfrac{2}{\sqrt{\ln(2)}\cdot \ell}\le \tfrac{1}{\nu_n}\cdot\tfrac{2^{3/2}}{\sqrt{\pi \ln(2)}} \cdot \tfrac{1}{\ell^3}.
\end{aligned}
\end{equation}
Hence 
\[
\EE\bigl[\text{cost}(\widehat X^*_n(T))\bigr] =  
\sum_{\ell=1}^\infty \ell\cdot n\cdot \PP(\{|Z_n|\in [a_\ell, a_{\ell+1})\})\leq \tfrac{n}{\nu_n} \cdot \tfrac{\sqrt{2}}{\sqrt{\pi \ln(2)}} \cdot \sum_{\ell=1}^\infty \tfrac{1}{\ell^2}.
\]
Finally note that $\lim_{n\to\infty}\nu_n^2 = 1$, due to~\eqref{Tvar} and~\eqref{Tvar2}, and therefore $\displaystyle{\inf_{n\colon \nu_n >0} \nu_n > 0}$, which completes the proof of the cost estimate and finishes the proof of Theorems~\ref{lt2} and~\ref{Tt3}.

\section{Discussion}
\label{discuss}

The key contribution of this paper is to show that even then when an autonomous SDE on $[0,T]$ has smooth coefficients with first order derivatives of at most linear growth and its solution $X$ satisfies $\EE[\sup_{t\in [0,T]} |X(t)|^p]< \infty$, where $p\in [1,\infty)$, it may happen that $X(T)$ can not be approximated on the basis of finitely many observations of the driving Brownian motion at fixed times in $[0,T]$ with a polynomial $p$-th mean error rate, see Theorem~\ref{t1}.
This result naturally leads to a number of questions related to possible extensions or tightenings with respect to the class of approximations, the speed of convergence, the moment conditions on the solution and the polynomial growth conditions on the first order derivatives of the coefficients. 

Does there exist an SDE of the above type such that a sub-polynomial rate of convergence holds for any adaptive approximation as well? For the SDE considered in the present paper, there is an adaptive method, which achieves a polynomial error rate, see Theorem~\ref{lt2}.

Does there exist an SDE of the above type such that the smallest possible $p$-th mean error  that can be achieved by any non-adaptive 
method based on $n$ evaluations of the driving Brownian motion
or even 
by any
adaptive method based on $n$ evaluations of the driving Brownian motion on average converges to zero slower than a given arbitrarily slow decay in terms of $n$? A negative result of this type is true for the class of  SDEs with bounded smooth coefficients, see \eqref{eq:intro3} and \cite{JMGY15,GJS17,Y17}.

The first order derivatives of the coefficients of the pathological SDE \eqref{hardsde3} constructed in the present paper are of at most linear growth. 
Can a sub-polynomial rate of convergence of the smallest possible $p$-th mean error also
occur when the first order derivatives of the coefficients are of at most polynomial growth with an exponent $\alpha\in (0,1)$?

Finally it is open, whether a sub-polynomial rate of convergence of the smallest possible $p$-th mean error can also occur when the solution $X$ has finite moments of some order $q>p$ or even satisfies $\EE[\sup_{t\in[0, T]}|X(t)|^q] < \infty$ for all $q\ge 1$. The pathological SDE \eqref{hardsde3} satisfies $\EE[\sup_{t\in[0, T]}|X(t)|^q] < \infty$ only for $q\le p$,
    see Lemma \ref{Tlemma2}.

\bibliographystyle{acm}
\bibliography{bibfile}

\end{document}